\newtheorem{theorem}{Theorem}
\newtheorem{proposition}{Proposition}
\newtheorem{corollary}{Corollary}
\newtheorem{lemma}{Lemma}
\newtheorem{hypothesis}{Hypothesis}
\theoremstyle{definition}
\newtheorem{definition}{Definition}
\theoremstyle{remark}
\newtheorem{remark}{Remark}
\def\C{\mathbb{C}}
\def\E{\mathbb{E}}
\def\P{\mathbb{P}}
\def\R{\mathbb{R}}
\def\N{\mathbb{N}}
\def\1{\mathbbm{1}}
\def\d{\partial}
\def\Z{\mathbb{Z}}
\def\cM{{\cal M}}
\def\cB{{\cal B}}
\begin{document}

\title{Quasi-stationarity and quasi-ergodicity for discrete-time Markov chains with absorbing boundaries moving periodically}
\author{William Oçafrain$^{1}$}
\date{\today}


\footnotetext[1]{Institut de Math\'ematiques de Toulouse. CNRS UMR 5219. Universit\'e Paul Sabatier, 118 route de Narbonne, F-31062 Toulouse cedex 09.\\
  E-mail: william.ocafrain@math.univ-toulouse.fr}

\maketitle

\begin{abstract}
    We are interested in quasi-stationarity and quasi-ergodicity when the absorbing boundary is moving. First we show that, in the moving boundary case, the quasi-stationary distribution and the quasi-limiting distribution are not well-defined when the boundary is oscillating periodically. Then we show the existence of a quasi-ergodic distribution for any discrete-time irreducible Markov chain defined on a finite state space in the fixed boundary case. Finally we use this last result to show the quasi-ergodicity in the moving boundary case.   
\end{abstract}

\textit{ Key words :}  Quasi-stationary distribution, quasi-limiting distribution, quasi-ergodic distribution, Q-process, periodic moving boundaries, random walk
\bigskip

\textit{ 2010 Mathematics Subject Classification: 60B10; 60F99 47D03 , 60J10} 
\bigskip

\section{Introduction}

Let $(\Omega, {\cal A}, \P)$ be a probability space and let $X = (X_n)_{n \in \N}$ be a Markov chain with a finite state space $(E, {\cal E})$, ${\cal E}$ being the $\sigma$-algebra containing all the subset of $E$. Let $\P_x$ be the probability measure such that $\P_x(X_0 = x)=1$ and, for any measure $\mu$ on $E$, define $\P_\mu = \int \P_x d\mu(x)$. Denote by $\cM_1(E)$ the set of probability measures defined on $E$. \\
We define, for each time $n \geq 0$, a subset $A_n \subset E$ called \textit{killing subset at time $n$} and we denote by $E_n$ the complement of $A_n$ called \textit{survival subset at time $n$}. We will call $(A_n)_{n \in \N}$ \textit{the moving killing subset} or \textit{the moving killing boundary}. 
We denote by $\tau$ the random variable defined as follows
\begin{equation}
\label{tau-1}
    \tau := \inf\{n \geq 0 : X_n \in A_n\}
\end{equation}
For any subset $B \subset E$, we define $\tau_B$ as follows
\begin{equation*}
    \tau_B := \inf\{n \geq 0 : X_n \in B\}
\end{equation*}
and, to make the notation easier, for any $m \in \N$, we denote by $\tau_m$ the random variable defined by
\begin{equation}
\label{tau-m}
\tau_m := \tau_{A_m} = \inf\{n \geq 0 : X_n \in A_m\}
\end{equation}
 This chapter will deal with quasi-stationary, quasi-limiting and quasi-ergodic distributions that we define as follows. 
\begin{definition}
\label{qsd-1}
$\nu$ is a \textit{quasi-stationary distribution} if for any $n \geq 0$
\begin{equation*}
    \P_\nu(X_n \in \cdot | \tau > n) = \nu(\cdot)
\end{equation*}
\end{definition}

\begin{definition}
\label{qld-1}
$\nu$ is a \textit{quasi-limiting distribution} if there exist some $\mu \in {\cal M}_1(E)$ such that
\begin{equation*}
    \P_\mu(X_n \in \cdot | \tau > n) \underset{n \to \infty}{\longrightarrow} \nu(\cdot)
\end{equation*}
\end{definition}

\begin{definition}
\label{qed-continuous-time}
$\nu$ is a \textit{quasi-ergodic distribution} or a \textit{mean-ratio quasi-stationary distribution} if for any $\mu \in {\cal M}_1(E)$ and any bounded measurable function $f$
\begin{equation*}
    \E_\mu\left(\frac{1}{n} \underset{k=0}{\overset{n-1}{\sum}} f(X_k)| \tau > n\right) \underset{n \to \infty}{\longrightarrow} \int f d\nu
\end{equation*}
\end{definition}
We will also be interested in the existence of a Q-process, which can be interpreted as the process $X$ conditioned never to be absorbed by $(A_n)_{n \in \N}$. \\ \\
In the case where the sequence $(A_n)_{n \in \N}$ does not depend on the time, the existence of these probability measures was established under several assumptions. See for example \cite{CMSM,MV2012} for a general review on the theory of quasi-stationary distributions.  
For modelling purpose, some recent works (see \cite{CCG2016}) introduce some Markov processes absorbed by moving boundaries and the classical theory on quasi-stationary distributions does not allow anymore to describe the asymptotic behavior of the process conditioned not to be absorbed. Our purpose is therefore to check whether these probability measures are still well-defined when $(A_n)_{n \geq 0}$ depends on the time or not. \\\\
In all what follows, we will assume that for any $x \in E_0$,
\begin{equation*}
    \P_x(\tau < \infty) = 1
\end{equation*}
and will also assume that the following hypothesis of irreducibility holds
\begin{equation}
\label{irreducibility-1}
    \forall n \in \N, \forall x,y \in E_n, \exists m \in \N, \P_x(X_{m \land \tau_n}=y) > 0 
\end{equation}

Quasi-stationary distribution will be studied for general moving killing boundaries. However, in a significant part of this chapter we will deal with moving killing boundaries $(A_n)_{n \in \N}$ which are $\gamma$-periodic with $\gamma \geq 2$. \\
In this chapter, we will actually show that there are no quasi-stationary distributions and quasi-limiting distributions in the sense of Definitions \ref{qsd-1} and \ref{qld-1} when the boundaries are moving periodically. However, we will show that the notion of quasi-ergodic distribution and Q-process still makes sense even when the boundary is moving. In particular, we will show the following statement.
\begin{theorem}
Assume that $(A_n)_{n \in \N}$ is $\gamma$-periodic. Then, for some initial law $\mu \in {\cal P}(E)$ and under assumptions which will be spelled out later, there exists a probability measure $\eta$ such that, for any bounded measurable function $f$,
\begin{equation*}
    \E_\mu\left(\frac{1}{n} \underset{k=0}{\overset{n-1}{\sum}} f(X_k) \middle| \tau > n\right) \underset{n \to \infty}{\longrightarrow} \int f d\eta
\end{equation*}
\end{theorem}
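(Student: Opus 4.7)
The strategy is to exploit the $\gamma$-periodicity of $(A_n)$ to reduce to a time-homogeneous, \emph{fixed}-boundary setting, invoke the paper's own fixed-boundary quasi-ergodicity theorem, and transport the conclusion back to the original chain. Concretely, I would introduce the $\gamma$-skeleton $\tX_N := X_{N\gamma}$ taking values in $E_0$, equipped with the sub-stochastic transition kernel
\begin{equation*}
\tilde P(x,y) := \P_x(X_\gamma = y,\ \tau > \gamma).
\end{equation*}
By $\gamma$-periodicity of $(A_n)$, $\tX$ is time-homogeneous, and its killing time $\widetilde{\tau}$ satisfies $\{\widetilde{\tau} > N\} = \{\tau > N\gamma\}$. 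A short verification based on hypothesis (\ref{irreducibility}) at $n=0$ should yield irreducibility of $\tilde P$ on $E_0$.

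The fixed-boundary quasi-ergodicity result announced in the abstract then applies to $\tX$, producing a probability measure $\widetilde{\eta}$ on $E_0$ such that
\begin{equation*}
\E_{\widetilde{\mu}}\left[\frac{1}{N}\sum_{i=0}^{N-1} \tilde f(\tX_i) \,\Big|\, \widetilde{\tau} > N\right] \underset{N\to\infty}{\longrightarrow} \int \tilde f\, d\widetilde{\eta}
\end{equation*}
for every bounded $\tilde f$ and every $\widetilde{\mu} \in {\cal P}(E_0)$. A routine enlargement (passing to the Markov pair chain $(\tX_i,\tX_{i+1})$) yields the analogous statement for two-step functions $G : E_0 \times E_0 \to \R$, with an explicit limit of the form $\int G(x,y)\,\widetilde{\eta}(dx)\widetilde Q(x,dy)$, where $\widetilde Q$ is the Doob transform of $\tilde P$ associated with its Perron eigenfunction (i.e., the Q-process kernel of $\tX$).

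To pull this back to $X$, write $n = N\gamma + r$ with $0 \le r < \gamma$ and decompose
\begin{equation*}
\sum_{k=0}^{n-1} f(X_k) = \sum_{i=0}^{N-1} \sum_{j=0}^{\gamma-1} f(X_{i\gamma+j}) + R_n, \qquad |R_n| \le \gamma\,\|f\|_\infty.
\end{equation*}
By the Markov property at times $i\gamma$ and $(i+1)\gamma$ combined with $\gamma$-periodicity of $(A_n)$, there exists $G : E_0 \times E_0 \to \R$, independent of $i$, with
\begin{equation*}
\E\left[\sum_{j=0}^{\gamma-1} f(X_{i\gamma+j}) \1_{X_{(i+1)\gamma}=y}\1_{\tau > (i+1)\gamma} \,\Big|\, X_{i\gamma} = x\right] = G(x,y).
\end{equation*}
Sandwiching $\{\tau > n\}$ between $\{\widetilde{\tau} > N+1\}$ and $\{\widetilde{\tau} > N\}$ and controlling the ratio of these probabilities via Perron--Frobenius for $\tilde P$ reduces the conditional Cesàro average for $X$ to the pair-Cesàro average for $\tX$ treated above. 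Disaggregating the limit over the $\gamma$ phases produces probability measures $\eta_0,\ldots,\eta_{\gamma-1}$ on $E$, and the required distribution is $\eta := \frac{1}{\gamma}\sum_{j=0}^{\gamma-1}\eta_j$.

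The main obstacle is this pull-back step. The event $\{\tau > n\}$ is strictly smaller than $\{\widetilde{\tau} > N\}$ when $r > 0$, because of the in-block constraint $X_k \notin A_k$ for $k \in \{N\gamma+1,\ldots,N\gamma+r\}$; moreover, both the numerator and denominator of the conditional expectation decay like $\lambda^n$, where $\lambda$ is the Perron eigenvalue of $\tilde P$. Making the error terms (the remainder $R_n$ and the discrepancy between the two events) genuinely negligible therefore requires sharp two-sided bounds $c_1 \lambda^n \le \P_\mu(\tau > n) \le c_2 \lambda^n$, uniform in $\mu$, which should follow from Perron--Frobenius theory for the irreducible sub-stochastic matrix $\tilde P$ but must be combined carefully with the in-block constraints.
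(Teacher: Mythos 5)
Your reduction has a genuine gap at its very first step: the claim that hypothesis \eqref{irreducibility} yields irreducibility of the skeleton kernel $\tilde P(x,y)=\P_x(X_\gamma=y,\ \tau>\gamma)$ is false in general. Hypothesis \eqref{irreducibility} is irreducibility of $X$ killed at each \emph{fixed} set $A_n$ separately; it says nothing about communication under the $\gamma$-step kernel that imposes the whole in-period constraint. The paper's own example (Section \ref{random-walk}) is a counterexample: for the simple random walk with the $2$-periodic boundary \eqref{simplest-case}, $X_{2N}$ has the parity of $X_0$, so $\tilde P$ splits into two non-communicating classes with \emph{different} Perron roots, and the limit measure depends on which class carries the initial mass. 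This is precisely the content of the ``assumptions to be spelled out later'': Hypothesis \ref{hyp1} (a unique dominant class ${\cal B}_{max}$ meeting the support of $\mu$) is what makes the statement true, and your proposal never reaches the point where that hypothesis would be needed because you assume the reducibility problem away. A second, acknowledged but unresolved, gap is the pull-back step: even granting a quasi-ergodic theorem for $\tilde P$, the skeleton is in general \emph{periodic} as well as reducible, so $\P_\mu(X_{N\gamma}\in\cdot\mid\tau>N\gamma)$ and the ratio $\P_\mu(\tau>N\gamma+r)/\P_\mu(\tau>N\gamma)$ oscillate rather than converge; the two-sided bounds $c_1\lambda^n\le\P_\mu(\tau>n)\le c_2\lambda^n$ you invoke are not enough, because the quasi-ergodic limit exists only through an exact cancellation of the oscillating factors $e^{-2inl\pi/T}$ between numerator and denominator. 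That cancellation is the heart of the matter (it is the computation $<v_l,g_l(f)>=<v_0,g_0(f)>$ together with Proposition \ref{non-nul} in the paper) and your sketch does not supply it.

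For comparison, the paper avoids both the block decomposition and the pair-chain enlargement by using the space--time chain $Y_n=(X_n,\overline n)$ on $E\times\Z/\gamma\Z$, for which the boundary $\partial=\{(x,\overline k):x\in A_k\}$ is fixed and $f(X_k)=\tilde f(Y_k)$ holds for \emph{every} $k$, not just multiples of $\gamma$; there is then no remainder $R_n$ and no discrepancy between $\{\tau>n\}$ and a skeleton survival event. The price --- that $Y$ is typically periodic and reducible --- is the same price your skeleton pays, and the paper spends Sections 4.1--4.2 paying it (Perron--Frobenius with the $T$-th roots of unity $\lambda_k=\rho e^{2ik\pi/T}$, then Hypothesis \ref{hyp1} for the reducible case). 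If you repaired your argument you would have to develop essentially the same machinery for $\tilde P$, and in addition carry out the pair-chain and partial-block bookkeeping that the space--time formulation makes unnecessary.
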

The proof is divided to several steps. First we reduce the problem to the study of quasi-stationary distribution in a non moving domain, but for a periodic Markov chain. Then we extend the result proved by Darroch and Senata (see \cite{Darroch1965}) in the aperiodic case to the periodic situation ($\gamma \in \N^*$). \\
This chapter ends with an application of this theorem to random walks absorbed by $2$-periodic moving boundaries.

\section{Quasi-stationary distribution with moving killing subset}

The following proposition shows that the notion of quasi-stationary distribution as defined in Definition \ref{qsd-1} is not relevant when the killing boundary is moving.
\begin{proposition}
\label{noqsd}
Assume there exist $l,m \in \N$ such that $A_l \ne A_m$. Then there is no measure $\nu \in {\cal M}_1(E)$ satisfying the following property:
\begin{equation}
\label{dqs}
    \forall n \in \N, ~~\P_\nu(X_n \in \cdot | \tau > n) = \nu(\cdot)
\end{equation}
\end{proposition}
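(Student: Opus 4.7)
The plan is to argue by contradiction. Suppose $\nu\in{\cal P}(E)$ satisfies \eqref{dqs}. Plugging $B = A_n$ into \eqref{dqs} gives $\nu(A_n)=\P_\nu(X_n\in A_n\mid \tau>n)=0$ for every $n\in\N$, since $\{\tau>n\}\subset\{X_n\in E_n\}$. Hence $\nu$ vanishes on $\bigcup_{n\in\N} A_n$ and is supported on $\bigcap_{n\in\N} E_n$.

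The heart of the proof is to extract a one-step eigenvalue equation from \eqref{dqs}. Let $K$ denote the transition kernel of $X$ and set $K_n(x,\cdot):=K(x,\cdot\cap E_n)$ for the one-step killed kernel at time $n$. By the Markov property, $\P_\nu(\tau>n,\,X_n\in\cdot)=\nu K_1 K_2\cdots K_n$, so \eqref{dqs} rewrites as $\nu K_1 K_2\cdots K_n=\P_\nu(\tau>n)\,\nu$. A direct induction on $n$ (comparing the identity at times $n$ and $n-1$) then yields
\begin{equation*}
\nu K_n = p_n\,\nu,\qquad p_n := \nu K(E_n)>0,\quad n\geq 1,
\end{equation*}
the positivity of $p_n$ being forced by $\P_\nu(\tau>n)>0$ (implicit in \eqref{dqs} for the conditioning to be meaningful). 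Restricted to $E_n$, this identity says that $(\nu(y))_{y\in E_n}$ is a non-negative left eigenvector, with positive eigenvalue $p_n$, of the substochastic matrix $(K(x,y))_{x,y\in E_n}$. Hypothesis \eqref{irreducibility} says precisely that this submatrix is irreducible, and the Perron--Frobenius theorem then forces every non-negative left eigenvector to be strictly positive on $E_n$: $\nu(y)>0$ for every $y\in E_n$ and every $n\geq 1$.

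To conclude, pick $l,m$ with $A_l\neq A_m$ and choose $y\in A_l\triangle A_m$; relabeling if necessary we may assume $y\in A_l\cap E_m$ with $m\geq 1$. Then $\nu(y)=0$ by the first paragraph (as $y\in A_l$), while $\nu(y)>0$ by Perron--Frobenius (as $y\in E_m$), a contradiction. The delicate point is the middle step: deriving the clean eigenvalue equation $\nu K_n = p_n\nu$ from the chained identity $\nu K_1\cdots K_n = \P_\nu(\tau>n)\,\nu$ and justifying $p_n>0$; once this is in hand, Perron--Frobenius together with the $A_n$--$E_n$ dichotomy closes the proof mechanically.
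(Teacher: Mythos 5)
Your argument takes a genuinely different route from the paper's. The paper characterises \eqref{dqs} through the one-step conditioned maps $f_n(\mu)=\P_\mu(X_1\in\cdot\mid\tau_{A_n}>1)$ and concludes that $\nu$ would have to be a classical quasi-stationary distribution simultaneously for the fixed killing sets $A_l$ and $A_m$, whose supports differ. You instead extract the left-eigenvector identity $\nu K_n=p_n\nu$ directly from the chained relation $\nu K_1\cdots K_n=\P_\nu(\tau>n)\,\nu$ and invoke Perron--Frobenius on the irreducible substochastic block over $E_n$. Both arguments land on the same dichotomy ($\nu$ vanishes on every $A_n$ and is strictly positive on every $E_n$ for which an eigenvector identity is available), and your derivation of the identity, of $p_n=\P_\nu(\tau>n)/\P_\nu(\tau>n-1)>0$, and of the strict positivity via irreducibility is correct.

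The gap is in the final ``relabeling''. Your eigenvector identity is only available for $n\ge 1$ (the index $0$ enters $\nu K_1\cdots K_n$ only through the support restriction $\nu(A_0)=0$, never as a factor), so you need a point of some $A_n$ ($n\ge 0$) lying in some $E_{n'}$ with $n'\ge 1$. The unique configuration where this fails while still having $A_l\ne A_m$ is $A_0\subsetneq A_1=A_2=\cdots$, and there the desired $y$ simply does not exist. Moreover, in that configuration the proposition itself is false: writing $A:=A_1$ and letting $\nu_A$ be the Perron quasi-stationary distribution for the fixed set $A$, one has $\nu_A(A_0)\le\nu_A(A)=0$, the events $\{\tau>n\}$ and $\{\tau_A>n\}$ coincide $\P_{\nu_A}$-a.s., and $\nu_A$ satisfies \eqref{dqs}. (Concretely: $E=\{a,b,c\}$, $A_0=\{a\}$, $A_n=\{a,b\}$ for $n\ge1$, uniform steps from $c$; then $\delta_c$ satisfies \eqref{dqs} and \eqref{irreducibility} holds.) So your proof is complete exactly under the non-degeneracy it implicitly requires --- for instance as soon as $A_l\ne A_m$ for some $l,m\ge1$, or more generally $A_n\not\subset A_{n'}$ for some $n\ge0$, $n'\ge1$ --- and you should state that hypothesis explicitly rather than bury it in ``relabeling if necessary''. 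The residual case is a defect of the statement (and of the paper's own proof, whose composition $f_n\circ\cdots\circ f_1$ never involves $f_0$), not something your method could have repaired.
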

\begin{proof}
 For any $n \in \N$, denote by $f_n : {\cal M}_1(E) \to {\cal M}_1(E)$ the function defined by
\begin{equation}
\label{f-n}
f_n : \mu \longrightarrow \mathbb{P}_\mu(X_1 \in \cdot | \tau_{n} > 1)
\end{equation}
where $\tau_{n}$ is defined in \eqref{tau-m} and denote by $\mu_n$ the probability measure defined by
\begin{equation}
\mu_n = \P_\mu(X_n \in \cdot | \tau > n)
\label{mu-n}
\end{equation}
By the Markov property, we have for any $n \in \N^*$
\begin{equation*}
\mu_n = \P_{\mu_{n-1}}(X_1 \in \cdot | \tau_{n} > 1) = f_{n}(\mu_{n-1})
\end{equation*}
Thus, by induction, we obtain for any $n \in \N$
\begin{equation*}
\mathbb{P}_\mu(X_{n} \in \cdot | \tau > n) = f_{n} \circ \ldots \circ f_{1}(\mu)
\end{equation*}
We deduce from this equality that
\begin{align*}
\forall n \in \N,~  \P_\nu(X_{n} \in \cdot | \tau > n) = \nu(\cdot)  &\Longleftrightarrow \forall n \in \N,~ f_n(\nu) = \nu \\
&\Longleftrightarrow \forall n \in \N,~ \P_\nu(X_1 \in \cdot | \tau_{n} > 1) = \nu(\cdot)
\end{align*}
In other words, $\nu$ is a quasi-stationary distribution in the moving sense if and only if it is a quasi-stationary distribution in the non-moving sense for all the subsets $A_n$.
In particular, if $\nu$ satisfies \eqref{dqs},
\begin{equation*}
   \nu(\cdot) = \P_\nu(X_1 \in \cdot | \tau_{l} > 1) \text{ and } \nu(\cdot) = \P_\nu(X_1 \in \cdot | \tau_{m} > 1)
\end{equation*}
where $l$ and $m$ have been mentioned in the statement of the proposition. However, since the assumption of irreducibility \eqref{irreducibility-1} holds, the previous statement is impossible since the support of the quasi-stationary distributions are different. 
\end{proof}
\begin{remark}
Proposition \ref{noqsd} can be extended to any general Markov process defined on any space state. In particular, for continuous-time Markov processes defined on a metric space $(E,d)$, we may replace the assumption of irreducibility \eqref{irreducibility-1} by the following assumption
\begin{equation*}
\forall t \in \R_+, \forall x,y \in E_t, \forall \epsilon > 0, \exists t_0 \in \R_+, \P_x(X_{t_0 \land \tau_t} \in B(y,\epsilon)) > 0
\end{equation*}
where $ B(y,\epsilon) := \{z \in E : d(y,z) < \epsilon\}$.
\end{remark}

Notice moreover that we did not need any assumption about the behavior of $(A_n)_{n \in \N}$. In all what follows, we consider that $(A_n)_{n \in \N}$ is $\gamma$-periodic with $\gamma \geq 2$.

\section{Quasi-limiting distribution when the killing subset is moving periodically}

We are now interested in knowing whether the definition of quasi-limiting distribution given in definition \ref{qld-1} is meaningful when the killing subset is moving or not. 
In the usual case, it is well known (see \cite{MV2012} p. 345) that quasi-stationary distribution and quasi-limiting distribution are equivalent notions. This implies that the non-existence of a quasi-stationary distribution implies the non-existence of any quasi-limiting distribution. However, this equivalence does not hold anymore in the moving case. Consider for example  $(A_n)_{n \geq 0}$ such that there exists $n_0$ such that for any $n \geq n_0$, $A_n = A_{n_0}$
and assume that there exists a quasi-stationary distribution $\nu_{n_0}$ (in the non-moving sense) such that for any probability measure $\mu$ on $E_{n_0}$,
\begin{equation*}
    \P_{\mu}(X_n \in \cdot | \tau_{n_0} > n) \underset{n \to \infty}{\longrightarrow} \nu_{n_0}
\end{equation*}
Thus, by the Markov property, for any $\mu \in {\cal M}_1(E)$ such that $\P_\mu(\tau > k) > 0$ for all $k \in \N$ and any $n \geq 0$,
\begin{equation*}
    \P_\mu(X_{n+n_0} \in \cdot | \tau > n+n_0) = \P_{\mu_{n_0}}(X_n \in \cdot | \tau_{A_{n_0}} > n) \underset{n \to \infty}{\longrightarrow} \nu_{n_0} 
\end{equation*}
where $\mu_n$ is defined in \eqref{mu-n} for any $n \in \N$.\\ From now on, we will assume that $(A_n)_{n \in \N}$ is periodic and will denote by $\gamma$ its period. We will show that quasi-limiting distribution is not well defined when the killing subset is moving periodically.
\begin{proposition}
\label{noqld}
Assume $(A_n)_{n \in \N}$ is $\gamma$-periodic and there exist $0 \leq l,m \leq \gamma -1$ such that $A_l \ne A_m$. \\
Then there is no $\nu \in {\cal M}_1(E)$ satisfying the following property:
\begin{equation*}
    \exists \mu \in {\cal M}_1(E), ~~\P_\mu(X_n \in \cdot | \tau > n) \underset{n \to \infty}{\longrightarrow} \nu(\cdot) 
\end{equation*}
\end{proposition}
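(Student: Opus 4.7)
The plan is to argue by contradiction and show that any candidate $\nu$ must be a fixed point of each map $f_n$ introduced in the proof of Proposition \ref{noqsd}, thereby reducing to the impossibility result already established there.

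Suppose $\mu_n := \P_\mu(X_n \in \cdot \mid \tau > n) \to \nu$ for some $\mu \in {\cal P}(E)$. First I would exploit the $\gamma$-periodicity of $(A_n)_{n \in \N}$, which immediately implies $f_{n+\gamma} = f_n$ for every $n$. Combined with the iteration $\mu_n = f_n(\mu_{n-1})$ established previously, this yields, along each residue class $n = k\gamma + r$ with $r \in \{0, \ldots, \gamma - 1\}$, the identity $\mu_{k\gamma + r} = f_r(\mu_{k\gamma + r - 1})$. Letting $k \to \infty$ and using that both $\mu_{k\gamma + r}$ and $\mu_{k\gamma + r - 1}$ tend to $\nu$, passing to the limit gives $\nu = f_r(\nu)$ for every $r$, hence $f_n(\nu) = \nu$ for every $n \in \N$. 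By the equivalence displayed inside the proof of Proposition \ref{noqsd}, this is exactly condition \eqref{dqs}, which that proposition already rules out under the hypothesis $A_l \neq A_m$.

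The delicate point is the limit passage, since $f_r$ is only continuous at measures $\mu'$ satisfying $\P_{\mu'}(\tau_r > 1) > 0$. To verify this at $\nu$, I would first note that $E$ is finite, so convergence of probability measures is just pointwise convergence on singletons; since $\mu_{k\gamma + r}$ is concentrated on $E_r$ (because $X_{k\gamma + r} \in E_r$ on the event $\tau > k\gamma + r$), one obtains $\nu(E_r) = \lim_k \mu_{k\gamma + r}(E_r) = 1$. Hence $\nu$ is supported on $E_r$ for every $r \in \{0, \ldots, \gamma - 1\}$. The irreducibility assumption \eqref{irreducibility} then forces $\P_x(X_1 \in E_r) > 0$ for each $x \in E_r$, and consequently $\P_\nu(\tau_r > 1) > 0$, which is the sought continuity condition. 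Once this is in place, the reduction sketched in the second paragraph goes through and produces the contradiction.
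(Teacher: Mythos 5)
Your proof is correct and follows essentially the same route as the paper: both deduce from the assumed convergence that $\nu$ must be a fixed point of every $f_r$ and then invoke the argument of Proposition \ref{noqsd}. The only differences are cosmetic --- you pass to the limit in the one-step recursion along each residue class rather than composing over a full period as the paper does with $f = f_\gamma \circ \cdots \circ f_1$, and you explicitly justify the continuity of $f_r$ at $\nu$, a point the paper leaves implicit.
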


\begin{proof}
Consider again the functions $f_m$ defined in \eqref{f-n}:
\begin{equation*}
    f_m : \mu \longrightarrow \P_\mu(X_1 \in \cdot | \tau_m > 1)
\end{equation*}
Then using the periodicity of $(A_n)_{n \in \N}$ and by the Markov property, for any $k \in \{1, \ldots, \gamma\}$, $m \in \mathbb{N^*}$ and $\mu \in {\cal M}_1(E)$
\begin{equation}
\mathbb{P}_\mu(X_{k + m \gamma} \in \cdot | \tau > k+m \gamma) = g_k \circ f^m(\mu)
\end{equation}
with 
\begin{align*}
&g_k = f_{k} \circ \ldots \circ f_{1}\\
&f = f_{\gamma} \circ \ldots \circ f_{1}
\end{align*}
 Assume that there exists $\mu \in {\cal M}_1(E)$ such that the sequence $(\mathbb{P}_\mu(X_{m} \in \cdot | \tau > m))_{m \in \mathbb{N}}$ converges to its limit $\nu$. Then
\begin{align*}
\nu &= \underset{m \to \infty}{\lim} \mathbb{P}_\mu(X_{m\gamma} \in \cdot | \tau > m\gamma) \\
&= \underset{m \to \infty}{\lim} f^m(\mu)
\end{align*}
So for any $k \in \{1, \ldots, \gamma\}$
\begin{equation*}
\nu = g_k(\nu) = \mathbb{P}_\nu(X_{k} \in \cdot | \tau > k)
\end{equation*}
In other words, for any $k \in \{1, \ldots, \gamma\}$,
\begin{equation*}
\nu = f_{k}(\nu)
\end{equation*}
We thus may conclude in the proof of proposition \ref{noqsd}.
\end{proof}

The previous statement implies therefore that the quasi-limiting distribution as defined in Definition \ref{qld-1} is not well-defined when the moving killing subset is periodic. However, according to the proof of the previous proposition, it seems that the sequence of these conditioned probabilities could have some limit points. \\\\
The following proposition allows us to pass from a moving problem to a non-moving problem. The existence of limit points will be therefore a consequence of the existence of classical quasi-stationary distributions for the transformed Markov chain. 
\begin{proposition}
For any $0 \leq m \leq \gamma-1$ and $\mu \in {\cal M}_1(E)$, there is a Markov chain $(X^{(m)}_n)_{n \in \mathbb{N}}$ such that 
\begin{equation}
\mathbb{P}_\mu((X_m, X_{m+\gamma}, \ldots, X_{m + n\gamma}) \in \cdot | \tau > m + n\gamma) = \mathbb{P}_{\mu_m}((X^{(m)}_0, X^{(m)}_1, \ldots, X^{(m)}_n) \in \cdot | \tau^{(m)}_{m} > n)
\label{egalite_lemme}
\end{equation}
where $\mu_m$ is defined in \eqref{mu-n} and $\tau^{(m)}_{m} = \inf\{n \in \N : X^{(m)}_n \in A_m\}$. 
\end{proposition}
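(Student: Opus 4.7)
My plan is to exploit the $\gamma$-periodicity of $(A_n)_{n \in \N}$ by sub-sampling the time-inhomogeneous killed chain $X$ at times $m, m+\gamma, m+2\gamma, \ldots$ to produce a time-homogeneous chain $X^{(m)}$. Writing $P$ for the transition kernel of $X$ and setting $\tilde P_n(x,B) := P(x,B \cap E_{n+1})$ for the one-step sub-Markov kernel associated with the killing at time $n+1$, the periodicity $A_{n+\gamma}=A_n$ forces $\tilde P_{n+\gamma}=\tilde P_n$. Consequently the $\gamma$-step sub-Markov kernel
\begin{equation*}
\tilde Q^{(m)} := \tilde P_m\,\tilde P_{m+1}\cdots\tilde P_{m+\gamma-1}
\end{equation*}
describes every block of $\gamma$ consecutive killed transitions whose starting time is congruent to $m$ modulo $\gamma$.

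I would then take $X^{(m)}$ to be the Markov chain on $E$ with initial law $\mu_m$ and transition kernel $Q^{(m)}$ obtained from $\tilde Q^{(m)}$ by sending the defect mass $1-\tilde Q^{(m)}(x,E)$ to an arbitrary fixed point of $A_m$ and by making every state of $A_m$ absorbing. Setting $\tau^{(m)}_m := \inf\{n \geq 0 : X^{(m)}_n \in A_m\}$, the identity \eqref{egalite_lemme}, interpreted at the level of the $\gamma$-spaced sub-sequence $(X_m, X_{m+\gamma},\ldots,X_{m+n\gamma})$, amounts to matching finite-dimensional marginals: iterating the Markov property for $X$ together with the definition of $\tilde Q^{(m)}$ yields, for arbitrary $B_0,\ldots,B_n \subset E$,
\begin{align*}
&\P_\mu(X_m \in B_0,\,X_{m+\gamma} \in B_1,\,\ldots,\,X_{m+n\gamma} \in B_n,\,\tau > m+n\gamma) \\
&\qquad = \P_\mu(\tau > m)\cdot \P_{\mu_m}(X^{(m)}_0 \in B_0,\,\ldots,\,X^{(m)}_n \in B_n,\,\tau^{(m)}_m > n),
\end{align*}
and since $\P_\mu(\tau > m+n\gamma)=\P_\mu(\tau > m)\,\P_{\mu_m}(\tau^{(m)}_m > n)$, the extra factor $\P_\mu(\tau > m)$ cancels when passing to conditional probabilities. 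If \eqref{egalite_lemme} is really meant to involve the full cylinder $(X_m, X_{m+1},\ldots,X_{m+n\gamma})$, one can simply enlarge the state space of $X^{(m)}$ to $E^\gamma$ so that each $X^{(m)}_k$ encodes a whole block of $\gamma$ consecutive original states, with the blockwise version of $\tilde Q^{(m)}$ as its transition kernel, and repeat the same calculation.

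The main obstacle is essentially bookkeeping: one has to keep careful track of the phase shift $m$ and of the grouping of killed transitions into blocks of length $\gamma$. The crucial input is the $\gamma$-periodicity of $(A_n)_{n \in \N}$, which is exactly what makes $\tilde Q^{(m)}$ independent of the block index and thereby promotes $X^{(m)}$ to a genuinely time-homogeneous Markov chain on $E$ with fixed absorbing subset $A_m$, reducing the moving-boundary problem to a classical one.
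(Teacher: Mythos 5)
Your proposal is correct and follows essentially the same route as the paper: the paper likewise defines the sub-sampled chain via the killed $\gamma$-step kernel $p(x,A)=\P_x(X_\gamma\in A,\ \tau_B>\gamma)$ for $A$ in the survival set, dumps the defect mass onto the absorbing set $A_m$ (equivalently $B_0$), reduces the phase shift $m$ to the case $m=0$ by the Markov property, and verifies the identity by matching finite-dimensional marginals through an induction on $n$. The statement is indeed meant in the sub-sampled sense $(X_m,X_{m+\gamma},\ldots,X_{m+n\gamma})$, so your main construction is the relevant one and your $E^\gamma$ variant is not needed.
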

\begin{proof}
According to the Markov property, it is enough to show that for any $\gamma$-periodic sequence of subsets $B = (B_n)_{n \in \N}$ and any measure $\mu \in {\cal M}_1(E)$, there exists a Markov chain $(Z_n)_{n \in \mathbb{N}}$ such that
\begin{equation}
\mathbb{P}_\mu((X_\gamma, \ldots, X_{n\gamma}) \in \cdot | \tau(B) >  n\gamma) = \mathbb{P}_\mu((Z_1, \ldots, Z_n) \in \cdot | \tilde{\tau}_{B_0} > n)
\label{egalite-lemme-simple}
\end{equation}
where $\tau(B) = \inf\{m \geq 0 : X_m \in B_m\}$ and $\tilde{\tau}_{B_0} = \inf\{n \in \N : Z_n \in B_0\}$. Denote $F_0$ the complement of $B_0$. 
For any $x \in F_0$ define $p(x,\cdot)$ by
\begin{align*}
    &p(x,A) = \mathbb{P}_x(X_\gamma \in A, \tau_{B} > \gamma), ~~\forall A \subset F_0 \\
    & p(x,B_0) = 1-p(x,F_0)
\end{align*} and we denote by $(Z_n)_{n \in \mathbb{N}}$ the Markov chain for which the transition kernel is $p$. We will show by induction that, for any $\phi_1, \ldots, \phi_n$ bounded measurable functions, 
\begin{equation*}
    \mathbb{E}_\mu(\phi_1(X_\gamma) \ldots \phi_n(X_{n\gamma}) \mathbbm{1}_{\tau(B) > n\gamma}) = \mathbb{E}_\mu(\phi_1(Z_1) \ldots \phi_n(Z_{n}) \mathbbm{1}_{\tilde{\tau}_{B_0} > n})
\end{equation*}
By definition of $(Z_n)_{n \in \N}$, for any probability measure $\mu$ and any bounded measurable function $\phi$,
$$\E_\mu(\phi(Z_1)\1_{\tilde{\tau}_{B_0} > 1}) = \E_\mu(\phi(X_\gamma) \1_{\tau(B) > \gamma})$$
which entails the base case.
Now assume that the equality for $n-1$ is satisfied. Let $\phi_1, \ldots, \phi_n$ be some bounded measurable functions. Then
\begin{align*}
    \mathbb{E}_\mu(\phi_1(X_\gamma) \ldots \phi_n(X_{n\gamma}) \mathbbm{1}_{\tau(B) > n\gamma}) &= \mathbb{E}_\mu(\phi_1(X_\gamma) \mathbbm{1}_{\tau(B) > \gamma} \mathbb{E}_{X_\gamma}(\phi_2(X_\gamma) \ldots \phi_n(X_{(n-1)\gamma}) \mathbbm{1}_{\tau(B) > (n-1)\gamma})) \\
    &=\mathbb{E}_\mu(\phi_1(Z_1) \mathbbm{1}_{\tilde{\tau}_{B_0} > 1} \mathbb{E}_{Z_1}(f_2(Z_1) \ldots \phi_n(Z_{n-1}) \mathbbm{1}_{\tilde{\tau}_{B_0} > (n-1)})) \\
    &= \mathbb{E}_\mu(\phi_1(Z_1) \ldots \phi_n(Z_{n}) \mathbbm{1}_{\tilde{\tau}_{B_0} > n})
\end{align*}
This concludes the proof.
\end{proof}

\section{Existence of quasi-ergodic distribution with periodic moving killing subsets}
In this section, our aim is to show the existence of a quasi-ergodic distribution as defined in Definition \ref{qed-continuous-time} when the boundary is moving periodically. This section will be split into three parts : 
\begin{enumerate}
\item We will first study quasi-ergodicity in the non-moving case (when $A_n = A_0, \forall n \in \N$) for irreducible Markov chains.
\item Then we will use the results obtained in the first part to deduce quasi-ergodicity for general Markov chains (irreducible or not), but still considering non-moving boundaries
\item Finally we will show the existence of quasi-ergodic distribution when $(A_n)_{n \in \N}$ is moving periodically.  
 \end{enumerate}
\subsection{Quasi-ergodic distribution in the classical non-moving sense for irreducible periodic Markov chains}
\label{1-1}
In this subsection we will study the quasi-ergodicity of one irreducible Markov chain $Y = (Y_n)_{n \in \N}$ in the classical non-moving sense, that is when the killing edge does not move. Without loss of generality, assume $Y$ is defined in the state space $E_0 =\{0, \ldots, K\}$ and that the cemetery is $\{0\}$. In this subsection and the following, $\tau$ will be defined as \eqref{tau-1} but refering to $Y$, that is
\begin{equation*}
\tau = \inf\{n \geq 0 : Y_n = 0\}
\end{equation*}
 Denote by $P$ the transition matrix of $Y$. Since $0$ is an absorbing state for $Y$, $P$ has the following form
\begin{equation*}
    P = \begin{pmatrix}
    1 & 0 \\
    v & Q
    \end{pmatrix}
\end{equation*}
where $Q$ is \textit{the sub-transition matrix}. We will assume that $Q$ is irreducible (i.e. $\forall x,y \in E_0, \exists n \in \N, Q^n(x,y) > 0$). As a result we can define $T_x$ the period of $x$ as
\begin{equation*}
    T_x := gcd\{n \in \N :  \P_x(Y_n = x, \tau > n) > 0\}
\end{equation*}
where $gcd$ refers to the greatest common divisor. By irreducibility of $Q$, all the $x$ have the same period and we denote by $T$ this common period.\\
The existence of quasi-ergodic distributions has already been proved by Darroch and Seneta in \cite{Darroch1965} when $T=1$. However we will see that this result is not enough for our purpose and we need to extend it to periodic Markov chains.\\
Due to the periodicity of $Q$, there exist $(C_i)_{0 \leq i \leq T-1}$ a partition of $E_0$ such that if the support of the initial distribution $\mu$ is included in $C_0$, then for any $n \in \N$ and $0 \leq k \leq T-1$,
\begin{equation*}
\P_\mu(Y_{k+nT} \in C_k, \tau > k+nT) = 1
\end{equation*} 
Without loss of generality, we construct $(C_i)_{0 \leq i \leq T-1}$ such that $1 \in C_0$. Formally $(C_i)_{0 \leq i \leq T-1}$ are defined by
\begin{align}
\label{cluster0}
    &C_0 := \{y \in E_0 : \exists n \in \N^*, \P_1(Y_{nT} = y,\tau > nT)> 0\} \\
\label{cluster}
    &\forall 1 \leq i \leq T-1,~~C_i := \{y \in E_0 : \exists x \in C_{i-1}, \P_x(Y_1 = y)>0\} 
\end{align}
For each $j \in \{0, \ldots, T-1\}$ and any $v \in \C^K$, we will denote by $v^{(j)}$ the sub-vector of $v$ restricted to $C_j$. \\
It is well known by the Perron-Frobenius theorem that the spectral radius $$\rho := \max\{|\lambda| : \lambda \in Sp(Q)\}$$ is a simple eigenvalue of $Q$ and that one can find a left-eigenvector $\nu = (\nu(j))_{1 \leq j \leq K}$ and a right-eigenvector $\xi = (\xi(j))_{1 \leq j \leq K}$ for $\rho$ (i.e. $\nu Q = \rho \nu$ and $Q\xi = \rho \xi$) such that $\nu(j) > 0$ and $\xi(j) > 0$ for all $j \in \{1, \ldots, K\}$. We may choose $\nu$ and $\xi$ such that
\begin{equation*}
 <\nu,\1> = <\nu,\xi> = 1
\end{equation*}
 where $<\cdot,\cdot>$ is the usual Hermitian product on $\C^K$. Moreover, since $Q$ is $T$-periodic, 
\begin{equation*}
    \{\lambda_k := \rho e^{\frac{2ik\pi}{T}} : 0 \leq k \leq T-1\} \subset Sp(Q)
\end{equation*}
and each $\lambda_k$ is simple. For each $\lambda_k$ we can obtain a left eigenvector $v_k$ and a right-eigenvector $w_k$ from $\nu$ and $\xi$ with the following transformation
\begin{equation}
\label{eigenvectors}
\forall j \in \{0, \ldots, T-1\}, v_k^{(j)} = e^{-i\frac{2\pi j k}{T}}\nu^{(j)} \text{ and } w_k^{(j)} = e^{i\frac{2\pi j k}{T}}\xi^{(j)}
\end{equation}
See Theorem 1.7 in [\cite{Seneta1980},p.23-24] for more details. \\
The vectors $(v_i)_{0 \leq i \leq T-1}$ are linearly independent. We can complete this family into a basis ${\cal V} = (v_i)_{0 \leq i \leq K-1}$ such that $v_i \in Span^\perp(v_0, \ldots, v_{T-1})$ for all $T \leq i \leq K-1$ where
\begin{equation*}
 Span^\perp(v_0, \ldots, v_{T-1}) = \{v \in \C^K : <v,v_i> = 0, ~\forall i \in \{0, \ldots, T-1\}\}
\end{equation*}
 Let us denote by $R$ the matrix representing the change of basis from the canonical basis to $\cal V$. Then we have the following decomposition
\begin{equation*}
    Q = R \begin{pmatrix}
    \begin{matrix}
    \lambda_0 & & \\
    & \ddots & \\
    & & \lambda_{T-1}
    \end{matrix} & 0\\
    0 & Q_0
    \end{pmatrix}
    R^{-1}
\end{equation*}
where $Q_0$ is a $(K-T) \times (K-T)$ matrix. We define the matrix $Q'$ by
\begin{equation*}
    Q' = R \begin{pmatrix}
    0 & 0 \\
    0 & Q_0
    \end{pmatrix} R^{-1}
\end{equation*}

\begin{proposition}
\label{quasi-ergodicity}
Let $f : \{1, \ldots, K\} \to \R$ be a bounded measurable function. Then for any $x \in \{1, \ldots, K\}$ and $n \in \N^*$,  
\begin{equation*}
    \E_x\left(\frac{1}{n} \underset{k=0}{\overset{n-1}{\sum}} f(Y_k) \1_{\tau > n} \right) = \rho^n \varphi(f) \underset{l=0}{\overset{T-1}{\sum}} e^{-\frac{2inl\pi}{T}}  <w_l,\delta_x> <v_l,\1> + o(\rho^n) 
\end{equation*}
where $$\varphi(f) = \underset{i=1}{\overset{K}{\sum}} f(i)\nu(i) \xi(i)$$.
\end{proposition}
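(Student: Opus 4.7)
The plan is to attack the sum spectrally. By conditioning at time $k$ and using that $0$ is absorbing,
\begin{equation*}
    \E_x\Bigl(\sum_{k=0}^{n-1} f(Y_k)\1_{\tau>n}\Bigr) \;=\; \sum_{k=0}^{n-1} \delta_x^\top\, Q^k M_f\, Q^{n-k}\1,
\end{equation*}
where $M_f := \mathrm{diag}(f(1),\ldots,f(K))$. The decomposition introduced before the proposition yields
\begin{equation*}
    Q^n \;=\; \sum_{l=0}^{T-1} \lambda_l^n\, A_l \;+\; (Q')^n,\qquad A_l \;:=\; w_l\, v_l^\top,
\end{equation*}
with $\|(Q')^n\|=O(\mu^n)$ for some $\mu<\rho$, since every eigenvalue of $Q_0$ has modulus strictly less than $\rho$. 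The goal is to plug this in, identify the diagonal $l=l'$ contribution as dominant, and bound everything else by $O(\rho^n)$, which becomes $o(\rho^n)$ after dividing by $n$.

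Two elementary identities drive the computation. Propagating $\nu Q=\rho\nu$ and $Q\xi=\rho\xi$ block by block across the cyclic classes shows that $\nu^{(j)}\!\cdot\xi^{(j)}$ is independent of $j$, hence equals $1/T$ by the normalization $\langle\nu,\xi\rangle=1$. Combined with \eqref{eigenvectors}, this gives the biorthogonality
\begin{equation*}
    v_l^\top w_{l'} \;=\; \sum_{j=0}^{T-1} e^{2i\pi j(l'-l)/T}\,\nu^{(j)}\!\cdot\xi^{(j)} \;=\; \frac{1}{T}\sum_{j=0}^{T-1} e^{2i\pi j(l'-l)/T} \;=\; \delta_{ll'},
\end{equation*}
which is precisely what makes $A_l$ the spectral projector onto the $\lambda_l$-eigenspace. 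The very same phase cancellation yields
\begin{equation*}
    v_l^\top M_f\, w_l \;=\; \sum_{j=0}^{T-1}\sum_{i\in C_j} f(i)\nu(i)\xi(i) \;=\; \varphi(f)\qquad \text{for every }l,
\end{equation*}
since the exponentials from $v_l$ and $w_l$ are conjugates on each cyclic class and cancel.

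Inserting the decomposition into the double sum, each $(l,l')$-pair with $l,l'\le T-1$ contributes
\begin{equation*}
\Bigl(\sum_{k=0}^{n-1}\lambda_l^k\lambda_{l'}^{n-k}\Bigr)\,(v_l^\top M_f w_{l'})\, w_l(x)\,(v_{l'}^\top\1).
\end{equation*}
On the diagonal $l=l'$ this geometric sum equals $n\lambda_l^n$ and the middle factor equals $\varphi(f)$; off the diagonal the ratio $\lambda_l/\lambda_{l'}$ has modulus one but is not equal to one, so the geometric sum is bounded and the whole contribution is $O(\rho^n)$. Mixed terms involving one or two factors of $(Q')^n$ are at most $O(\mu^n\rho^n)$. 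Collecting,
\begin{equation*}
    \sum_{k=0}^{n-1}\delta_x^\top Q^k M_f Q^{n-k}\1 \;=\; n\,\varphi(f)\sum_{l=0}^{T-1}\lambda_l^n\, w_l(x)\,(v_l^\top\1) \;+\; O(\rho^n),
\end{equation*}
and division by $n$ sends the error into $o(\rho^n)$. Finally, since $\overline{v_l}=v_{T-l}$ and $\overline{w_l}=w_{T-l}$, the reindexing $l\mapsto T-l$ shows the leading sum is real and can be rewritten in terms of the Hermitian pairings $\langle w_l,\delta_x\rangle$ and $\langle v_l,\1\rangle$, which flips $\lambda_l^n=\rho^n e^{2i\pi nl/T}$ into $\rho^n e^{-2i\pi nl/T}$, yielding the stated form. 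The main subtlety is really the off-diagonal bound: that $\sum_{k=0}^{n-1}(\lambda_l/\lambda_{l'})^k$ stays bounded relies on simplicity of each peripheral eigenvalue $\lambda_l$, which itself comes from the irreducibility of $Q$.
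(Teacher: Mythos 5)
Your proof is correct, and its overall architecture coincides with the paper's: split $\E_x(f(Y_k)\1_{\tau>n})$ at time $k$ by the Markov property, decompose $Q^n$ into the peripheral part $\sum_l \lambda_l^n w_l v_l^\top$ plus a remainder of strictly smaller spectral radius, observe that only the diagonal pairs $l=l'$ produce the $n\lambda_l^n$ growth, use the phase cancellation $e^{-2i\pi jl/T}e^{+2i\pi jl/T}=1$ on each cyclic class to see that the middle factor is $\varphi(f)$ for every $l$, and bound the off-diagonal geometric sums and the $Q'$ cross terms. Where you genuinely diverge is in how the rank-one projectors are identified: you derive the biorthogonality $v_l^\top w_{l'}=\delta_{ll'}$ from the neat observation that $\nu^{(j)}\cdot\xi^{(j)}$ is constant in $j$ (hence equal to $1/T$), whereas the paper reaches the same point through Lemma \ref{lemma-eigenvector}, proved by expressing the coefficients $\alpha_l(x)$ as ratios of Gram determinants and exploiting the circulant structure of the Gram matrix of $(v_0,\ldots,v_{T-1})$. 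Your route is shorter and makes the mechanism (the peripheral eigenvectors are already a biorthogonal system) more transparent; the paper's route is tied to its particular choice of completing $(v_l)$ by the Hermitian-orthogonal complement. One slip worth fixing: the cross terms involving a single factor of $(Q')$ are not $O(\mu^n\rho^n)$; a term like $\sum_{k=0}^{n-1}\rho^{n-k}\mu^{k}$ is only $O(\rho^n)$ (and the double-$(Q')$ term is $O(n\mu^n)$). Since $O(\rho^n)=o(n\rho^n)$, this does not affect the conclusion after dividing by $n$, but the stated intermediate bound is too strong.
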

\begin{proof}
Let $f:\{1, \ldots, K\} \to \R$ be a bounded measurable function. In this proof we will consider probability measures on $\{1, \ldots, K\}$ and functions defined on $\{1, \ldots, K\}$ as $K$-vectors. Thus for any $x \in \{1, \ldots, K\}$ we can say
\begin{equation}
\label{label}
    \E_{x}\left(f(Y_n)\1_{\tau > n}\right) = <\delta_x Q^n, f>
\end{equation}
where $\delta_x$ is the Dirac measure on $x$.
For any $x \in \{1, \ldots, K\}$, define $(\alpha_k(x))_{0 \leq k \leq T-1}$ the unique family of $\C^K$ such that there is $\mu_x \in Span^\perp(v_0, \ldots, v_{T-1})$ such that 
\begin{equation*}
\delta_x = \underset{k=0}{\overset{T-1}{\sum}} \alpha_k(x) v_k + \mu_x
\end{equation*}
We will use the following lemma whose proof is postponed after the proof of the theorem.
\begin{lemma}
\label{lemma-eigenvector}
For any $0 \leq k \leq T-1$,
\begin{equation*}
    (\alpha_k(x))_{x \in E_0} = w_k
\end{equation*}
where $w_k$ is defined in \eqref{eigenvectors} 
\end{lemma}
Thus we can write
\begin{equation}
\label{decomposition}
\delta_x = \underset{k=0}{\overset{T-1}{\sum}} w_k(x) v_k + \mu_x
\end{equation}
So, using \eqref{label} and \eqref{decomposition}, for any $n \in \N$
\begin{align*}
\mathbb{E}_{x}\left(f(X_n)\mathbbm{1}_{\tau > n}\right)
&= <\underset{k=0}{\overset{T-1}{\sum}} w_k(x) v_k Q^n, f> + <\mu_x Q^n,f> \\
&= \underset{k=0}{\overset{T-1}{\sum}} \overline{\lambda_k^n w_k(x)} <v_k, f> + <\mu_x(Q')^n,f> 
\end{align*} 
Now, using the Markov property, for any $k \leq n$, 
\begin{align}
\mathbb{E}_\mu(f(Y_k)\mathbbm{1}_{\tau > n}) &= \mathbb{E}_\mu(\mathbbm{1}_{\tau > k} f(Y_k) \mathbb{P}_{Y_k}(\tau > n-k)) \\
&=\mathbb{E}_\mu(\mathbbm{1}_{\tau > k}g_{k,n}(f)(Y_k))
\label{Markov_property}
\end{align}
where, for any $y \in E_0$,  $$g_{k,n}(f)(y) = f(y)\P_y(\tau > n-k)$$ Then,
\begin{align*}
    g_{k,n}(f)(y)
    &= f(y) <\delta_y Q^{n-k}, \mathbbm{1}> \\
    &= \underset{m=0}{\overset{T-1}{\sum}} \overline{\lambda_m^{n-k}} f(y) \overline{w_m(y)}<v_m,\mathbbm{1}> + f(y)<\mu_y (Q')^{n-k},\1>
\end{align*}
Define, for any $k \in \{0, \ldots, T-1\}$ and $n \in \N$,
\begin{align*}
    &g_k(f):y \to f(y)\overline{w_l(y)} \\
    &w_n(f) : y \to f(y) <\mu_y(Q')^n,\1>
\end{align*}Then, using \eqref{Markov_property}, for any $k \leq n$,
\begin{align*}
    \mathbb{E}_x(f(Y_k)\mathbbm{1}_{\tau > n}) &=<\delta_x Q^k, g_{k,n}(f)>\\ &= \underset{l=0}{\overset{T-1}{\sum}} \overline{\lambda_l^k w_l(x)} <v_l, g_{k,n}(f)> + <\mu_x(Q')^k,g_{k,n}(f)> \\
    &= A_{k,n}+B_{k,n}+C_{k,n}+D_{k,n}
\end{align*}
where
\begin{align*}
    &A_{k,n} = \underset{l=0}{\overset{T-1}{\sum}}\underset{m=0}{\overset{T-1}{\sum}} \overline{\lambda_l^k \lambda_m^{n-k}w_l(x)}<v_l,g_m(f)><v_m,\1> \\
    &B_{k,n} = \underset{l=0}{\overset{T-1}{\sum}} \overline{\lambda_l^k w_l(x)} <v_l, w_{n-k}(f)> \\
    &C_{k,n} = \underset{m=0}{\overset{T-1}{\sum}}\overline{\lambda_m^{n-k}} <v_m,\mathbbm{1}> <\mu_x(Q')^k,g_m(f)> \\
    &D_{k,n} = <\mu_x(Q')^k,w_{n-k}(f)>
\end{align*}
Hence for any $n \in \N^*$
\begin{equation}
\label{sum}
    \underset{k=0}{\overset{n-1}{\sum}} \mathbb{E}_x(f(Y_k)\mathbbm{1}_{\tau > n}) = \underset{k=0}{\overset{n-1}{\sum}} A_{k,n} +\underset{k=0}{\overset{n-1}{\sum}} B_{k,n} +\underset{k=0}{\overset{n-1}{\sum}} C_{k,n} +\underset{k=0}{\overset{n-1}{\sum}} D_{k,n} 
\end{equation}
\textit{i) Study of $\underset{k=0}{\overset{n-1}{\sum}} A_{k,n}$}

For any $n \in \N^*$,
\begin{align*}
\underset{k=0}{\overset{n-1}{\sum}} A_{k,n} &=  \underset{l=0}{\overset{T-1}{\sum}}\underset{m=0}{\overset{T-1}{\sum}} \left(\underset{k=0}{\overset{n-1}{\sum}}\overline{\lambda_l^k \lambda_m^{n-k}}\right)\overline{w_l(x)}<v_l,g_m(f)><v_m,\1> \\
&= \underset{l=0}{\overset{T-1}{\sum}} n \overline{\lambda_l^n w_l(x)} <v_l,g_l(f)><v_l,\1> + \underset{l \ne m}{\sum} \overline{\lambda_m} \left(\frac{\overline{\lambda_l^{n}} - \overline{\lambda_m^{n}}}{\overline{\lambda_l}-\overline{\lambda_m}}\right) \overline{w_l(x)}<v_l,g_m(f)><v_m,\1> 
\end{align*}
On the one side,
$$\underset{l=0}{\overset{T-1}{\sum}} n \overline{\lambda_l^n w_l(x)} <v_l,g_l(f)><v_l,\1> = n\rho^n \underset{l=0}{\overset{T-1}{\sum}} e^{-\frac{2inl\pi}{T}}  \overline{w_l(x)} <v_l,g_l(f)><v_l,\1>$$
On the other side, for any $0 \leq l \ne m \leq T-1$,
\begin{align*}
\overline{\lambda_m} \left(\frac{\overline{\lambda_l^{n}} - \overline{\lambda_m^{n}}}{\overline{\lambda_l}-\overline{\lambda_m}}\right) &= \rho  e^{-\frac{2im\pi}{T} }\left(\frac{\rho^n  e^{-\frac{2inl\pi}{T} } -\rho^n  e^{-\frac{2inm\pi}{T} }}{\rho  e^{-\frac{2il\pi}{T} }-\rho  e^{-\frac{2im\pi}{T} }}\right) \\
&= \rho^n  e^{-\frac{2im\pi}{T} }\left(\frac{  e^{-\frac{2inl\pi}{T} } -  e^{-\frac{2inm\pi}{T} }}{  e^{-\frac{2il\pi}{T} }-  e^{-\frac{2im\pi}{T} }}\right)
\end{align*}
$\left(e^{-\frac{2im\pi}{T} }\left(\frac{  e^{-\frac{2inl\pi}{T} } -  e^{-\frac{2inm\pi}{T} }}{  e^{-\frac{2il\pi}{T} }-  e^{-\frac{2im\pi}{T} }}\right)\right)_{n \in \N}$ is bounded, hence 
$$\frac{1}{n} \times e^{-\frac{2im\pi}{T} }\left(\frac{  e^{-\frac{2inl\pi}{T} } -  e^{-\frac{2inm\pi}{T} }}{  e^{-\frac{2il\pi}{T} }-  e^{-\frac{2im\pi}{T} }}\right) \underset{n \to \infty}{\longrightarrow} 0$$
We deduce that, for any $0 \leq l \ne m \leq T-1$,
$$\rho^n  e^{-\frac{2im\pi}{T} }\left(\frac{  e^{-\frac{2inl\pi}{T} } -  e^{-\frac{2inm\pi}{T} }}{  e^{-\frac{2il\pi}{T} }-  e^{-\frac{2im\pi}{T} }}\right) = o(n\rho^n)$$
and therefore
$$\underset{l \ne m}{\sum} \overline{\lambda_m} \left(\frac{\overline{\lambda_l^{n}} - \overline{\lambda_m^{n}}}{\overline{\lambda_l}-\overline{\lambda_m}}\right) \overline{w_l(x)}<v_l,g_m(f)><v_m,\1> = o(n\rho^n)$$
since this is a finite sum. Hence
$$\underset{k=0}{\overset{n-1}{\sum}} A_{k,n} =  n\rho^n \underset{l=0}{\overset{T-1}{\sum}} e^{-\frac{2inl\pi}{T}}  \overline{w_l(x)} <v_l,g_l(f)><v_l,\1> + o(n\rho^n)$$
\textit{ii) Study of $\underset{k=0}{\overset{n-1}{\sum}} B_{k,n}$}\\
For any $y \in E$, $n \in \N$ and $0 \leq l \leq T-1$
\begin{align*}
    \underset{k=1}{\overset{n-1}{\sum}} \overline{\lambda_l^k} w_{n-k}(f)(y) &= f(y) <\mu_y \left(\underset{k=0}{\overset{n-1}{\sum}}\lambda_l^k (Q')^{n-k}\right),\1> \\
    &= f(y) <\mu_yQ'(\lambda_l I_{K} - Q')^{-1}(\lambda_l^{n} I_{K}-(Q')^{n}),\1> 
\end{align*}
where $I_{K}$ is the $K \times K$-identity matrix. For any $0 \leq l \leq T-1$ and $n \in \N$,
$$\lambda_l^n I_k - (Q')^n = \rho^n (e^\frac{2i\pi nl}{T} I_k - \rho^{-n}(Q')^n)$$
and $(e^\frac{2i\pi nl}{T} I_k - \rho^{-n}(Q')^n)_{n \in \N}$ is bounded since the spectral radius of $Q'$ is smaller than $\rho$. Hence 
$$\frac{1}{n} \left(e^\frac{2i\pi nl}{T} I_k - \rho^{-n}(Q')^n\right) \underset{n \to \infty}{\longrightarrow} 0$$
where $0$ is understood as the zero matrix, and we deduce that 
$$<\mu_yQ'(\lambda_l I_{K} - Q')^{-1}(\lambda_l^{n} I_{K}-(Q')^{n}),\1> = o(n\rho^n)$$
As a result, for any $n \in \N$,
$$\underset{k=1}{\overset{n-1}{\sum}} \overline{\lambda_l^k} w_{n-k}(f)(y) = o(n\rho^n)$$
Hence for any $n \in \N$
\begin{equation*}
    \underset{k=0}{\overset{n-1}{\sum}} B_{k,n} = \underset{l=0}{\overset{T-1}{\sum}} \overline{w_l(x)} <v_l,\underset{k=0}{\overset{n-1}{\sum}} \overline{\lambda_l^k} w_{n-k}(f)> = o(n\rho^n) 
\end{equation*}
\textit{iii) Study of $\underset{k=0}{\overset{n-1}{\sum}} C_{k,n}$}\\
In the same way as $\underset{k=0}{\overset{n-1}{\sum}} B_{k,n}$,
\begin{align*}
\underset{k=0}{\overset{n-1}{\sum}} C_{k,n} &= \sum_{k=0}^{n-1} \sum_{m=0}^{T-1} \overline{\lambda_m^{n-k}} <v_m, \1> <\mu_x (Q')^k, g_m(f)> \\
&= \sum_{m=0}^{T-1} <v_m,\1> <\mu_x \left( \sum_{k=0}^{n-1} \lambda_m^{n-k} (Q')^k\right), g_m(f)>
\end{align*}
For any $0 \leq m \leq T-1$ and $n \geq 1$,
$$\sum_{k=0}^{n-1} \lambda_m^{n-k} (Q')^k = \lambda_m \times (\lambda_m I_{K} - Q')^{-1} (\lambda_m^{n} I_{K}-(Q')^{n})$$
We already showed that for any $0 \leq m \leq T-1$ and $n \geq 1$
 $$(\lambda_m I_{K} - Q')^{-1} (\lambda_m^{n} I_{K}-(Q')^{n}) = o(n \rho^n)$$
Finally,
\begin{equation*}
    \underset{k=0}{\overset{n-1}{\sum}} C_{k,n} = o(n\rho^n)
\end{equation*}
\textit{iv) Study of $\underset{k=0}{\overset{n-1}{\sum}} D_{k,n}$}\\
Finally, let us denote by $(q')_{i,j}^{(n)}$, for $i,j \in \{1, \ldots, K-T\}$ and $n \in \N$, the coefficient of $(Q')^n$ located at the $i$th row and the $j$th column. Then for any $n \in \N$
\begin{equation}
\label{sumD}
    \underset{k=0}{\overset{n-1}{\sum}} D_{k,n} = \underset{i,j,l,m}{\sum} \mu_x(j) f(i) \mu_i(m) \left(\underset{k=0}{\overset{n-1}{\sum}}(q')_{m,l}^{(n-k)}(q')_{i,j}^{(k)}\right)  
\end{equation}
Let $i,j,l,m \in \{1, \ldots, K\}$. By definition of the matrix $Q'$, the spectral radius of $Q'$ is strictly smaller than $\rho$. We deduce from this 
\begin{equation}
\label{equivalence}
    (q')^{(n)}_{i,j} = o(\rho^n), ~~(q')^{(n)}_{m,l} = o(\rho^n)
\end{equation}
In particular there is a positive number $C$ such that for any $n \in \N$ and $m,l \in \{1, \ldots, K\}$,
\begin{equation*}
    (q')^{(n-k)}_{m,l} \leq C \rho^{n-k}
\end{equation*}
Hence,
\begin{align}
   \underset{k=0}{\overset{n-1}{\sum}}(q')_{m,l}^{(n-k)}(q')_{i,j}^{(k)} &\leq C\underset{k=0}{\overset{n-1}{\sum}}\rho^{n-k}(q')_{i,j}^{(k)} \\
   &= C n \rho^n \left(\frac{1}{n}\underset{k=1}{\overset{n-1}{\sum}}\rho^{-k}(q')_{i,j}^{(k)}\right)
   \label{majoration}
\end{align}
However, by \eqref{equivalence}, $\rho^{-n}q_{i,j}^{(n)} \rightarrow 0$ when $n$ tends to infinity and using Cesaro's lemma,
\begin{equation*}
    \frac{1}{n}\underset{k=0}{\overset{n-1}{\sum}}\rho^{-k}(q')_{i,j}^{(k)} \underset{n \to \infty}{\longrightarrow} 0
\end{equation*}
Thus using \eqref{sumD} and \eqref{majoration}, we deduce that
\begin{equation*}
    \underset{k=0}{\overset{n-1}{\sum}} D_{k,n} = o(n\rho^n)
\end{equation*}
Hence, gathering all these results and using \eqref{sum},
\begin{equation}
\label{asymptotic-sum}
    \underset{k=0}{\overset{n-1}{\sum}} \mathbb{E}_x(f(Y_k)\mathbbm{1}_{\tau > n}) = n\rho^n \underset{l=0}{\overset{T-1}{\sum}} e^{-\frac{2inl\pi}{T}}  \overline{w_l(x)} <v_l,g_l(f)><v_l,\1> + o(n\rho^n)
\end{equation}
However we have for any $l \in \{0, \ldots, T-1\}$
\begin{align*}
    <v_l, g_l(f)> &= \underset{j=1}{\overset{K}{\sum}} f(j) \overline{v_l(j) w_l(j)} \\
    &= \underset{j=0}{\overset{T-1}{\sum}} \underset{x \in C_j}{\sum} f(x) \overline{v_l(x) w_l(x)} \\
    &= \underset{j=0}{\overset{T-1}{\sum}} \underset{x \in C_j}{\sum} f(x) \overline{e^{-i\frac{2\pi j l}{T}} \nu(x) e^{i\frac{2\pi j l}{T}} \xi(x)} \\
    &= <v_0, g_0(f)>
\end{align*}
As a result,
\begin{equation*}
    \E_x\left(\underset{k=0}{\overset{n-1}{\sum}} f(Y_k) \1_{\tau > n} \right) = n\rho^n <v_0,g_0(f)> \underset{l=0}{\overset{T-1}{\sum}} e^{-\frac{2inl\pi}{T}}  \overline{w_l(x)} <v_l,\1> + o(n\rho^n)
\end{equation*}
\end{proof}
Now we prove Lemma \ref{lemma-eigenvector} quoted in the previous proof.
\begin{proof}[Proof of Lemma \ref{lemma-eigenvector}]
Let us start by proving that $\alpha_l$ is a right-eigenvector associated to $\lambda_l$. Since $Q$ is a real matrix, it is equivalent to show that $\overline{\alpha_l}$ is a right-eigenvector associated to $\overline{\lambda_l}$. \\
First remind that $\alpha_l$ is defined by the relations
\begin{equation*}
    \delta_k = \underset{l=0}{\overset{T-1}{\sum}} \alpha_l(k) v_l + \delta_k'
\end{equation*}
for any $k \in E_0$ and with $\delta_k' \in Span^\perp(v_0, \ldots, v_{T-1})$. This implies for any $k$ 
\begin{equation*}
    <\delta_k, v_m> = \underset{l=0}{\overset{T-1}{\sum}} \overline{\alpha_l(k)} <v_l,v_m>
\end{equation*}
or, in other words,
\begin{equation*}
    \begin{pmatrix}
    <\delta_k, v_0> \\
    \vdots \\
    <\delta_k, v_{T-1}>
    \end{pmatrix} = \begin{pmatrix}
    <v_0,v_0> & \ldots & <v_{T-1},v_{0}> \\
    \vdots & \ddots & \vdots \\
    <v_{0}, v_{T-1}> & \ldots & <v_{T-1}, v_{T-1}> 
    \end{pmatrix}
    \begin{pmatrix}
    \overline{\alpha_0(k)} \\
    \vdots \\
    \overline{\alpha_{T-1}(k)}
    \end{pmatrix}
\end{equation*}
Denote by $A$ the matrix
\begin{equation*}
    A = \begin{pmatrix}
    <v_0,v_0> & \ldots & <v_{T-1},v_{0}> \\
    \vdots & \ddots & \vdots \\
    <v_{0}, v_{T-1}> & \ldots & <v_{T-1}, v_{T-1}> 
    \end{pmatrix}
\end{equation*}
$A$ is simply the Gram's matrix of the basis $(v_i)_{0 \leq i \leq T-1}$. Thus the determinant $\det(A)$ is positive and for any $x \in E_0$
\begin{equation*}
    \overline{\alpha_l(x)} = \frac{1}{\det(A)}\begin{vmatrix}
    <v_0,v_0> & \ldots  & <\delta_x, v_0>  & \ldots & <v_{T-1},v_0> \\
    \vdots & \ddots & \vdots  & \ddots & \vdots \\
    <v_0,v_{T-1}> & \ldots  & <\delta_x, v_{T-1}> &  \ldots & <v_{T-1},v_{T-1}>
    \end{vmatrix}
\end{equation*}
where the column $(<\delta_x, v_0>, \ldots, <\delta_x,v_{T-1}>)^T$ is the $l$-th columns of the matrix.
We want to show now that $\alpha_l$ is a right-eigenvector associated to $\lambda_l$, that is
\begin{equation}
\label{eigenvalue}
    \forall v \in \C^K, <v,Q\overline{\alpha_l}> = \overline{\lambda_l} <v,\overline{\alpha_l}>
\end{equation}
In fact it is enough to show \eqref{eigenvalue} when $v$ is one of left-eigenvectors and when $v \in Span^\perp(v_0, \ldots, v_{T-1})$. 
In the case where $v = v_k$ for $k \in \{0, \ldots, T-1\}$
\begin{align*}
    <v_k, \overline{\alpha_l}> &= \underset{j=1}{\overset{K}{\sum}} \overline{v_k(j)} \frac{1}{\det(A)} \begin{vmatrix}
    <v_0, v_0> &  \ldots & <\delta_j, v_0> & \ldots  &  <v_{T-1},v_0> \\
    \vdots &  \ddots  & \vdots & \ddots &  \vdots \\
    <v_0, v_{T-1}> &  \ldots & <\delta_j, v_{T-1}> &  \ldots & <v_{T-1},v_{T-1}>
    \end{vmatrix} \\
    &= \frac{1}{\det(A)} \begin{vmatrix}
    <v_0, v_0> &  \ldots & <v_k, v_0> & \ldots  &  <v_{T-1},v_0> \\
    \vdots &  \ddots  & \vdots & \ddots &  \vdots \\
    <v_0, v_{T-1}> &  \ldots & <v_k, v_{T-1}> & \ldots &  <v_{T-1},v_{T-1}>
    \end{vmatrix} \\
    &=\left\{
    \begin{array}{ll}
        1 & \mbox{if } l=k \\
        0 & \mbox{otherwise}
    \end{array}
\right.
    \end{align*}
We deduce from this 
\begin{equation*}
    <v_k, Q \overline{\alpha_l}> = \overline{\lambda_l} <v_k, \overline{\alpha_l}>, ~~\forall ~0 \leq k \leq T-1
\end{equation*}
Finally, for any $v \in Span(v_0, \ldots, v_{T-1})^\perp$,
\begin{equation*}
    <v,\alpha_l> = \frac{1}{\det(A)} \begin{vmatrix}
    <v_0, v_0> &  \ldots & 0 & \ldots  &  <v_{T-1},v_0> \\
    \vdots &  \ddots  & \vdots & \ddots &  \vdots \\
    <v_0, v_{T-1}> &  \ldots & 0 & \ldots &  <v_{T-1},v_{T-1}>
    \end{vmatrix}
    =0
\end{equation*}
Thus we have
\begin{equation*}
    <v,Q\overline{\alpha_l}> = 0 = \overline{\lambda_l} <v_k, \overline{\alpha_l}>
\end{equation*}
because $\overline{\,^tQ}v \in Span(v_0, \ldots, v_{T-1})^\perp$. \\
Hence for each $k \in \{0, \ldots, T-1\}$, there is $\beta_k \in \C$ such that $\alpha_k = \beta_k w_k$ (where $w_k$ is defined at the beginning of the subsection). We will show that $\beta_k = \beta_0 = 1$ for any $k$. \\
Remark that $A$ can be written as $\underset{i=1}{\overset{T}{\sum}} a_{i-1} P_{\sigma_i}$ where $P_{\sigma_i}$ is the permutation matrix of $\sigma_i$ where $\sigma_i = (i ~i+1 ~\ldots ~i-2 ~i-1)$ and $a_0 > 0$ and $a_1, \ldots, a_{T-1} \in \C$. In other words, $A$ is of the following shape
\begin{equation*}
    A = \begin{pmatrix}
    a_0 & a_1 & a_2 & \ldots & a_{T-1} \\
    a_{T-1} & a_0 & a_1 & \ldots & a_{T-2} \\
    \vdots & \vdots & \ddots & \vdots \\
    a_1 & a_2 & a_3 & \ldots & a_0
    \end{pmatrix}
\end{equation*}
with $a_0 > 0$ and $a_1, \ldots, a_{T-1} \in \C^{T-1}$. Moreover, since $1 \in C_0$, $<\delta_1, v_l> = <\delta_1, v_0> = \nu_1$ for any $l \in \{0, \ldots, T-1\}$. As a result, for any $l \in \{0, \ldots, T-1\}$,
\begin{align}
\label{essai0}
    \det(A) \alpha_l(1) &= \begin{vmatrix}
    a_0 &  \ldots & \nu_1 & \ldots  &  a_{T-1} \\
    \vdots &  \ddots  & \vdots & \ddots &  \vdots \\
    a_1 &  \ldots & \nu_1 & \ldots &  a_1
    \end{vmatrix} \\
\label{essai1}
    &= \begin{vmatrix}
    \nu(1) & a_{l+1} & \ldots &  \ldots  &  a_{l-1} \\
    \vdots & \vdots & \ddots  &  \ddots &  \vdots \\
    \nu(1) & a_{l+2} & \ldots &  \ldots &  a_l
    \end{vmatrix} \\
\label{essai2}
    &= \begin{vmatrix}
    \nu(1) & a_{1} & \ldots &  \ldots  &  a_{T-1} \\
    \vdots & \vdots & \ddots  &  \ddots &  \vdots \\
    \nu(1) & a_{2} & \ldots &  \ldots &  a_0
    \end{vmatrix} \\
    &=\det(A) \alpha_0(1)
\end{align}
Indeed, from \eqref{essai0} to \eqref{essai1}, we applied a circular permutation for the columns in order to put the vector $\,^t(\nu(1), \ldots, \nu(1))$ at the first column, and the determinant stays the same after this transformation. From \eqref{essai1} to \eqref{essai2}, we did a circular permutation on the rows, which does not affect either the determinant. \\
We deduce from this equality that $\beta_k = \beta_0$ for any $k \in \{0, \ldots, T-1\}$ because $w_k(1) = w_0(1)$. Concerning the fact that $\beta_0 = 1$, remark that 
\begin{align*}
    \underset{i=1}{\overset{K}{\sum}} \nu(i) \overline{\alpha_0(i)} &= \underset{i=1}{\overset{K}{\sum}} \overline{v_0(i) \alpha_0(i)} \\ &= \frac{1}{\det(A)} \begin{vmatrix}
    <v_0,v_0> & \ldots & <v_{T-1},v_{0}> \\
    \vdots & \ddots & \vdots \\
    <v_{0}, v_{T-1}> & \ldots & <v_{T-1}, v_{T-1}> 
    \end{vmatrix} \\
    &=1
\end{align*}
And 
\begin{equation*}
    \underset{i=1}{\overset{K}{\sum}} \nu(i) \overline{\alpha_0(i)} = \beta_0 \underset{i=1}{\overset{K}{\sum}} \nu(i) \xi(i) = 1
\end{equation*}
\end{proof}
The statement of Theorem 1 is meaningful provided the coefficient of the leading term $\rho^n$ is not equal to $0$. In the following proposition we prove that this coefficient is actually not $0$.  
\begin{proposition}
\label{non-nul}
For any $n \in \N$ and any $x$ 
\begin{equation*}
    \underset{l=0}{\overset{T-1}{\sum}} e^{-\frac{2inl\pi}{T}} <w_l,\delta_x> <v_l,\1> \ne 0
\end{equation*}
\end{proposition}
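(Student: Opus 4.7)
The plan is to substitute the explicit formulas for the eigenvectors from \eqref{eigenvectors} directly into the sum and recognize the resulting expression as a character sum over the $T$-th roots of unity, which collapses by discrete Fourier orthogonality.

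Fix $x \in E_0$ and let $j_0 \in \{0,\ldots,T-1\}$ be the unique index with $x \in C_{j_0}$. Using the Hermitian convention $\langle u,v\rangle = \sum_i \overline{u_i}\,v_i$ (as implicitly fixed in the proof of Proposition \ref{quasi-ergodicity}), and the fact that $\nu$ and $\xi$ are real and strictly positive, the relations \eqref{eigenvectors} yield
$$\langle w_l,\delta_x\rangle = \overline{w_l(x)} = e^{-\frac{2i\pi j_0 l}{T}}\xi(x),\qquad \langle v_l,\1\rangle = \sum_{k=0}^{T-1} e^{\frac{2i\pi k l}{T}}\nu(C_k),$$
where $\nu(C_k) := \sum_{i\in C_k}\nu(i)$. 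Substituting these in and interchanging the two sums gives
$$\sum_{l=0}^{T-1} e^{-\frac{2i n l\pi}{T}}\langle w_l,\delta_x\rangle \langle v_l,\1\rangle = \xi(x) \sum_{k=0}^{T-1} \nu(C_k) \sum_{l=0}^{T-1} e^{\frac{2i\pi l (k-n-j_0)}{T}}.$$
The inner sum over $l$ is the standard geometric sum over $T$-th roots of unity: it equals $T$ when $k \equiv n + j_0 \pmod T$ and $0$ otherwise.

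The whole expression therefore collapses to $T\,\xi(x)\,\nu(C_{(n+j_0)\bmod T})$. Now the Perron--Frobenius theorem, as applied at the beginning of Subsection \ref{1}, guarantees that every entry of $\xi$ is strictly positive, and every entry of $\nu$ is strictly positive on each nonempty cyclic class $C_k$. Hence the quantity is strictly positive, and in particular nonzero, which is the desired conclusion. The only real step is spotting the Fourier-orthogonality structure; once that is done, positivity of the Perron eigenvectors closes the argument with no further effort, so I do not anticipate a serious obstacle.
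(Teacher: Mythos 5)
Your proof is correct and follows essentially the same route as the paper's: substitute the explicit eigenvector relations \eqref{eigenvectors}, interchange the sums, and collapse the inner sum over $l$ by orthogonality of the $T$-th roots of unity, leaving $T\,\xi(x)\,\nu(C_{(n+j_0)\bmod T})>0$. The paper writes the vanishing terms via a Dirichlet-kernel (sine-ratio) identity rather than simply citing the geometric sum, but the argument is identical in substance.
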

\begin{proof}
Let $x \in E_0$. Then there exists $k \in \{0, \ldots, T-1\}$ such that $x \in C_k$. Thus, for any $n \in \N$,
\begin{align*}
    \underset{l=0}{\overset{T-1}{\sum}} e^{-\frac{2inl\pi}{T}}\overline{w_l(x)}  <v_l,\1> 
    &= \underset{l=0}{\overset{T-1}{\sum}} e^{-\frac{2i(n+k)l\pi}{T}}\xi(x)\left(\underset{j=0}{\overset{T-1}{\sum}} \underset{y \in C_j}{\sum} e^{\frac{2i\pi l j}{T}} \nu(y)\right)  \\
     &=\underset{j=0}{\overset{T-1}{\sum}} \underset{y \in C_j}{\sum} \xi(x) \nu(y) \left( \underset{l=0}{\overset{T-1}{\sum}} e^{-\frac{2i\pi (n+k-j)l}{T}}\right) \\
    &=T\underset{T | n+k-j}{\sum}\underset{y \in C_j }{\sum}  \xi(x) \nu(y) \\&~~~~~~+  \underbrace{\underset{T \nmid n+k-j}{\sum}\underset{y \in C_j }{\sum}  \xi(x) \nu(y) e^{\frac{i\pi(n+k-j)(T-1)}{T}}\frac{\sin(\pi(n+k-j))}{\sin(\frac{\pi(n+k-j)}{T})}}_{=0} \\
    &=T\underset{T | n+k-j}{\sum}\underset{y \in C_j}{\sum}\xi(x) \nu(y) > 0
\end{align*}    
\end{proof}

\subsection{Quasi-ergodic distribution for the classical non-moving sense in the general case}
\label{2}
Now assume that the sub-transition matrix $Q$ is not necessarily irreducible. For each $x \in \{1, \ldots, K\}$, we denote by $D_x$ the subset of $\{1, \ldots, K\}$ defined by
\begin{equation*}
    D_x := \{y \in \{1, \ldots, K\} : \exists n,m \in \N, \P_x(Y_n = y) > 0 \text{ and } \P_y(Y_m = x) > 0\}
\end{equation*}
It is well-known that $(D_x)_{x \in \{1, \ldots, K\}}$ are equivalence classes. Note that,
for each $x$, the restriction of $Y$ on $D_x$ is irreducible. Thus we can associate, for each $D_x$, a period $T_x$.  We can also associate to $D_x$ a spectral radius $\rho_x$ and some left and right-eigenvectors $(v_{x,l})_{0 \leq l \leq T_x-1}$ and $(w_{x,l})_{0 \leq l \leq T_x-1}$ constructed in the same way as in the subsection \ref{1-1}. Particularly, for every $x \in \{1, \ldots, K\}$, $\nu_x := v_{x,0}$ and $\xi_x := w_{x,0}$ are vectors whose all the components are positive and such that $<\nu_x,\1>=<\nu_x,\xi_x>=1$. We define also, for any $x$, $$\varphi_x : f \to \underset{j=1}{\overset{|D_x|}{\sum}} f(j)\nu_x(j) \xi_x(j)$$ 
where $|D_x|$ is the number of elements in $D_x$. 
Now fix $\mu \in {\cal M}_1(\{1, \ldots, K\})$. Denote by $Supp(\mu)$ the support of $\mu$. Then we can define 
    $${\cal B} = \{x \in \{1, \ldots, K \}: Supp(\mu) \cap D_x \ne \emptyset\}$$
    $$\rho_{max} = \underset{x \in {\cal B}}{\max}~ \rho_x$$
 and we define ${\cal B}_{max}$ as follows
\begin{equation*}
    {\cal B}_{max} = \{x \in {\cal B} : \rho_x = \rho_{max}\}
\end{equation*}
We set the following hypothesis
\begin{hypothesis}
\label{hyp1}
There exists $x_{max} \in \{1, \ldots, K\}$ such that
\begin{equation*}
    {\cal B}_{max} = D_{x_{max}}
\end{equation*}
Under this hypothesis, the following notation will be used
\begin{equation}
\label{nu-max}
\nu_{max} = \nu_{x_{max}}
\end{equation}
\begin{equation}
\label{xi-max}
\xi_{max} = \xi_{x_{max}}
\end{equation}
\begin{equation}
\varphi_{max} = \varphi_{x_{max}}
\end{equation}
\end{hypothesis}
In all what follows, we have to keep in mind that the definition of ${\cal B}_{max}$ implicitly depends on the initial distribution $\mu$ (more precisely on the support of $\mu$). \\
The following statement explains therefore that the quasi-ergodic distribution exists if the Hypothesis \ref{hyp1} holds.
\begin{theorem}
\label{qed-thm}
Let $\mu \in {\cal M}_1(\{1, \ldots, K\})$. Then, if the Hypothesis \ref{hyp1} holds, the following convergence holds for any measurable bounded function $f :\{1, \ldots, K\} \to \R$,
\begin{equation*}
\E_\mu\left(\frac{1}{n}\underset{k=0}{\overset{n-1}{\sum}} f(Y_k) | \tau > n\right) \underset{n \to \infty}{\longrightarrow} \varphi_{max}(f)
\end{equation*}
\end{theorem}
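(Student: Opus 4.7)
The strategy is to reduce the possibly reducible setting to the irreducible case of Proposition \ref{quasi-ergodicity} by decomposing the initial distribution according to communicating classes. Writing $\mu = \sum_{C} \mu(C)\,\bar\mu_C$, where the sum runs over the distinct equivalence classes $C$ intersecting $Supp(\mu)$ and $\bar\mu_C$ denotes the normalized restriction of $\mu$ to $C$, linearity of expectation yields
\begin{equation*}
\E_\mu\!\left(\sum_{k=0}^{n-1} f(Y_k)\1_{\tau>n}\right) = \sum_C \mu(C)\,\E_{\bar\mu_C}\!\left(\sum_{k=0}^{n-1} f(Y_k)\1_{\tau>n}\right),
\end{equation*}
with the analogous decomposition for $\P_\mu(\tau>n)$. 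Under Hypothesis \ref{hyp1}, $D_{x_{max}}$ is the unique class in ${\cal B}$ of maximal spectral radius $\rho_{max}$, and the plan is to show that $D_{x_{max}}$ alone governs the leading asymptotics in both the numerator and the denominator, every other class contributing only $o(n\rho_{max}^n)$ (respectively $o(\rho_{max}^n)$).

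For the dominant class $C = D_{x_{max}}$, the restriction of $Q$ to $D_{x_{max}}$ is irreducible, so Proposition \ref{quasi-ergodicity} applies directly and gives
\begin{equation*}
\E_{\bar\mu_C}\!\left(\sum_{k=0}^{n-1} f(Y_k)\1_{\tau>n}\right) = n\,\rho_{max}^n\,\varphi_{max}(f)\,\Theta_n(\bar\mu_C) + o(n\rho_{max}^n),
\end{equation*}
where $\Theta_n(\nu) := \sum_{l=0}^{T_{x_{max}}-1} e^{-2i n l \pi / T_{x_{max}}} <w_{x_{max},l},\nu><v_{x_{max},l},\1>$. For a class $C \in {\cal B}$ with $\rho_C < \rho_{max}$, two subcases arise. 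If $D_{x_{max}}$ is not reachable from $C$, a block-triangular Perron--Frobenius analysis of $Q$ restricted to the classes reachable from $C$ yields an $O(n\rho_C^n) = o(n\rho_{max}^n)$ bound. If $D_{x_{max}}$ is reachable from $C$, one applies the strong Markov property at the hitting time $T := \inf\{n \geq 0 : Y_n \in D_{x_{max}}\}$ and splits $\sum_{k=0}^{n-1}$ around $T$: the pre-hitting portion decays at rate $\rho_C$ and is therefore negligible, while the post-hitting portion, conditioned on $(T,Y_T)$, reduces to the setting of Proposition \ref{quasi-ergodicity} and produces a contribution of the same form $n\rho_{max}^n\varphi_{max}(f)\Theta_n(\tilde\mu_C) + o(n\rho_{max}^n)$ for an induced entry distribution $\tilde\mu_C$ supported on $D_{x_{max}}$.

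Summing the class-by-class contributions gives
\begin{equation*}
\E_\mu\!\left(\sum_{k=0}^{n-1} f(Y_k)\1_{\tau>n}\right) = n\rho_{max}^n\,\varphi_{max}(f)\,\Theta_n(\mu_{\mathrm{eff}}) + o(n\rho_{max}^n)
\end{equation*}
for an aggregated effective distribution $\mu_{\mathrm{eff}}$ on $D_{x_{max}}$. Applying the same analysis with $f \equiv 1$ and using $\varphi_{max}(\1) = <\nu_{max},\xi_{max}> = 1$ produces $\P_\mu(\tau>n) = \rho_{max}^n\,\Theta_n(\mu_{\mathrm{eff}}) + o(\rho_{max}^n)$. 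Proposition \ref{non-nul}, whose argument localizes to $D_{x_{max}}$ and extends by linearity to any nonzero positive initial measure on it, ensures that $\Theta_n(\mu_{\mathrm{eff}})$ is bounded below uniformly in $n$ by a strictly positive constant, so dividing cancels the oscillating factor and yields the claimed limit $\varphi_{max}(f)$. The main technical obstacle lies in the strong-Markov reduction for classes from which $D_{x_{max}}$ is reachable: one must interchange the asymptotic expansion of Proposition \ref{quasi-ergodicity} with the summation over the entry time $T$, which is justified by dominated convergence exploiting the geometric tail bound $\P_x(T = j) = O(\alpha^j)$ for some $\alpha < \rho_{max}$ (the spectral radius of $Q$ restricted to the union of transient classes through which the chain passes before entering $D_{x_{max}}$).
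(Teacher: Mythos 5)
Your overall strategy coincides with the paper's: decompose the initial law over the communicating classes, apply Proposition \ref{quasi-ergodicity} to get the leading-order expansion class by class, observe that under Hypothesis \ref{hyp1} only $D_{x_{max}}$ survives the normalization, and invoke Proposition \ref{non-nul} (extended by positivity and linearity) to keep the denominator away from zero. The paper's own proof is exactly this and nothing more: it writes $\E_\mu(\cdot)=\sum_j \mu(j)\E_j(\cdot)$, substitutes the per-state expansion \eqref{zbraaa}, and discards the classes with $\rho_j<\rho_{max}$ as $o(1)$ after dividing by $\rho_{max}^n$. Where you genuinely depart from the paper is the treatment of inter-class transitions: your strong-Markov decomposition at the hitting time of $D_{x_{max}}$, for classes from which $D_{x_{max}}$ is reachable, has no counterpart in the paper, which tacitly treats each $D_x$ as if the chain could exit it only into the cemetery --- that is the only situation in which \eqref{zbraaa} is literally an application of Proposition \ref{quasi-ergodicity}. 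Your extra step addresses a real issue that the paper glosses over; in the paper's intended application (the chain $Y_n=(X_n,\overline{n})$ of Theorem \ref{qed-thm-mov}, e.g.\ the random walk example) the classes are closed, so the shortcut is harmless there.

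However, your remaining subcase contains an error. For a class $C\in{\cal B}$ from which $D_{x_{max}}$ is not reachable, the block-triangular structure of $Q$ restricted to the classes reachable from $C$ gives a bound of order (polynomial in $n$) times $(\rho')^n$, where $\rho'$ is the \emph{maximum} spectral radius over all classes reachable from $C$ --- the spectrum of a block-triangular matrix is the union of the spectra of its diagonal blocks, so the off-diagonal coupling does not bring the growth rate down to $\rho_C$. Since Hypothesis \ref{hyp1} only constrains the classes meeting $Supp(\mu)$, nothing excludes a class reachable from $Supp(\mu)$ but disjoint from it with spectral radius exceeding $\rho_{max}$; in that situation your ``negligible'' terms dominate both numerator and denominator and the asserted limit fails (indeed the theorem itself fails: a two-state example with a slow class feeding into a long-lived one already breaks it). This is not a defect you introduced --- the paper's proof silently assumes the same thing, and more --- but your write-up asserts a bound that is false in general. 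To close the argument you should either add the hypothesis that every class reachable from $Supp(\mu)$ has spectral radius at most $\rho_{max}$, with equality only on $D_{x_{max}}$, or restrict to closed communicating classes as in the paper's applications.
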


\begin{proof}
According to Proposition \ref{quasi-ergodicity}, giving the fact that $Y$ is irreducible on each $D_x$, we have for any $x \in \{1, \ldots, K\}$
\begin{equation}
\label{zbraaa}
    \E_x\left(\frac{1}{n} \underset{k=0}{\overset{n-1}{\sum}} f(Y_k) \1_{\tau > n} \right) = \rho_x^n \varphi_x(f) \underset{l=0}{\overset{T-1}{\sum}} e^{-\frac{2inl\pi}{T_x}}  <w_{x,l},\delta_x> <v_{x,l},\1> + o(\rho_x^n) 
\end{equation}
Thus, for any $\mu \in {\cal M}_1(E)$
\begin{align*}
    &\E_\mu\left(\frac{1}{n}\underset{k=0}{\overset{n-1}{\sum}} f(Y_k) | \tau > n\right) = \frac{\underset{j=1}{\overset{K}{\sum}} \mu(j) \E_j\left(\frac{1}{n} \underset{k=0}{\overset{n-1}{\sum}} f(Y_k) \1_{\tau > n} \right)}{\underset{j=1}{\overset{K}{\sum}} \mu(j) \P_j(\tau > n)} \\
    &=\frac{\underset{j=1}{\overset{K}{\sum}} \mu(j) \rho_j^n \varphi_j(f) \underset{l=0}{\overset{T_j-1}{\sum}} e^{-\frac{2inl\pi}{T_j}}  <w_{j,l},\delta_x> <v_{j,l},\1> + o(\rho_j^n)}{\underset{j=1}{\overset{K}{\sum}} \mu(j) \rho_j^n \underset{l=0}{\overset{T_j-1}{\sum}} e^{-\frac{2inl\pi}{T_j}}  <w_{j,l},\delta_x> <v_{j,l},\1> + o(\rho_j^n)} \\
    &=\frac{\underset{j \in {\cal B}_{max}}{\sum} \varphi_j(f)\mu(j) \underset{l=0}{\overset{T_j-1}{\sum}} e^{-\frac{2inl\pi}{T_j}}  <w_{j,l},\delta_x> <v_{j,l},\1> + o(1)}{\underset{j \in {\cal B}_{max}}{\sum} \mu(j) \underset{l=0}{\overset{T_j-1}{\sum}} e^{-\frac{2inl\pi}{T_j}}  <w_{j,l},\delta_x> <v_{j,l},\1> + o(1)}\\
    &=\frac{ \varphi_{max}(f)\underset{j \in {\cal B}_{max}}{\sum}\mu(j) \underset{l=0}{\overset{T_j-1}{\sum}} e^{-\frac{2inl\pi}{T_j}}  <w_{j,l},\delta_x> <v_{j,l},\1> + o(1)}{\underset{j \in {\cal B}_{max}}{\sum} \mu(j) \underset{l=0}{\overset{T_j-1}{\sum}} e^{-\frac{2inl\pi}{T_j}}  <w_{j,l},\delta_x> <v_{j,l},\1> + o(1)}\\
\end{align*}
where the Hypothesis \ref{hyp1} was used for the last equality, implying that $\varphi_j(f) = \varphi_{max}(f)$ for all $j \in \cB_{max}$. Note moreover that this hypothesis is useful only to make this equality right. 

Then using Proposition \ref{non-nul}, we can conclude
\begin{equation*}
    \E_\mu\left(\frac{1}{n}\underset{k=0}{\overset{n-1}{\sum}} f(Y_k) | \tau > n\right) \underset{n \to \infty}{\longrightarrow} \varphi_{max}(f)
\end{equation*}
\end{proof}

\subsection{Quasi-ergodic distribution with periodic moving killing subset}
In this subsection we are interested in the quasi-ergodicity of the chain $X$ defined in the Introduction considering that the boundaries are moving $\gamma$-periodically.
We denote by $Y = (Y_n)_{n \in \N}$ the Markov chain defined on $E \times \Z/\gamma\Z$ by 
\begin{equation}
\label{y}
Y_n = (X_n,\overline{n})
\end{equation}
where $\bar n$ is the residue of $n$, modulo $\gamma$. 
$Y$ is therefore a Markov chain defined on a finite space state, which is irreducible  if and only if $gcd(T(X),\gamma) = 1$, where $T(X)$ is the period of $(X_n)_{n \in \N}$. If the chain $Y$ is actually irreducible, the associated period is  \begin{equation*}
    T = LCM(T(X),\gamma)
\end{equation*}
where $LCM(\cdot, \cdot)$ refers to the least common multiple.\\ Moreover we have
\begin{equation*}
\tau = \inf\{n \geq 0 : X_n \in A_n\} = \inf\{n \geq 0 : Y_n \in \partial\}
\end{equation*}
with $$\partial := \{(x,\overline{k}): x \in A_k\}$$
 Remark that $\partial$ is a non moving killing subset for the chain $Y$. Thus we can apply Theorem \ref{qed-thm} to the process $Y$ which yields the following theorem
\begin{theorem}
\label{qed-thm-mov}
Let $\mu \in {\cal M}_1(E_0)$. Assume that $(A_n)_{n \in \N}$ is periodic and  $Y$ defined in \eqref{y} satisfies the Hypothesis \ref{hyp1}. Then, for any measurable bounded function $f$,
\begin{equation*}
    \E_\mu\left(\frac{1}{n}\underset{k=0}{\overset{n-1}{\sum}} f(X_k) | \tau > n\right) \underset{n \to \infty}{\longrightarrow} \underset{(x,y) \in E \times \Z/\gamma\Z -\d}{\sum} f(x) \nu_{max}(x,y) \xi_{max}(x,y)
\end{equation*}
where $\nu_{max}$ and $\xi_{max}$ are the probability measures defined in \eqref{nu-max} and \eqref{xi-max} relatively to $Y$.
\end{theorem}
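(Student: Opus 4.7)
The plan is to reduce the moving-boundary problem to the fixed-boundary setting already covered by Theorem \ref{qed-thm}. The key observation, which the subsection's introduction has essentially prepared, is that the space-time augmented process $Y_n = (X_n,\overline{n})$ defined in \eqref{y} is a genuine Markov chain on the finite state space $E \times \Z/\gamma\Z$, whose transition kernel is obtained from that of $X$ by appending the deterministic clock increment $\overline{k} \mapsto \overline{k+1}$. Under this enlargement, the time-dependent killing boundary $(A_n)_{n \in \N}$ becomes a single, time-independent absorbing set $\partial = \{(x,\overline{k}) : x \in A_k\}$, and the two hitting times coincide: $\tau = \inf\{n \geq 0 : X_n \in A_n\} = \inf\{n \geq 0 : Y_n \in \partial\}$.

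Given a bounded measurable $f : E \to \R$, I would lift it to $\tilde f : E \times \Z/\gamma\Z \to \R$ by setting $\tilde f(x,\overline{k}) := f(x)$, so that $f(X_k) = \tilde f(Y_k)$ for every $k$. An initial distribution $\mu \in {\cal P}(E_0)$ for $X$ corresponds to the initial distribution $\tilde\mu := \mu \otimes \delta_{\overline{0}}$ for $Y$; since the support of $\mu$ lies in $E_0 = E \setminus A_0$, the support of $\tilde\mu$ lies outside $\partial$, so that $\tilde\mu$ is an admissible initial law for the absorbed chain $Y$. With this dictionary, the conditional expectation appearing in Theorem \ref{qed-thm-mov} is literally the one considered in Theorem \ref{qed-thm} for the chain $Y$.

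I would then invoke Theorem \ref{qed-thm} applied to $Y$, its fixed absorbing set $\partial$, the lifted function $\tilde f$, and the initial law $\tilde\mu$, using that Hypothesis \ref{hyp1} is satisfied for $Y$ by assumption. This yields
\begin{equation*}
\E_{\tilde\mu}\left(\frac{1}{n}\sum_{k=0}^{n-1} \tilde f(Y_k) ~\Big|~ \tau > n\right) \underset{n \to \infty}{\longrightarrow} \varphi_{max}(\tilde f) = \underset{(x,y) \in (E \times \Z/\gamma\Z) \setminus \partial}{\sum} \tilde f(x,y)\, \nu_{max}(x,y)\, \xi_{max}(x,y),
\end{equation*}
and substituting $\tilde f(x,y) = f(x)$ recovers the announced limit on the right-hand side of the theorem.

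No genuine obstacle is anticipated: the argument is essentially a change of state space. The only care needed is in the bookkeeping, namely (i) checking that the irreducible classes, periods, and Perron--Frobenius data invoked by Theorem \ref{qed-thm} are computed for $Y$ and its restricted sub-transition matrix (so that $\nu_{max}$ and $\xi_{max}$ are supported on $(E \times \Z/\gamma\Z) \setminus \partial$), and (ii) noting that the class ${\cal B}_{max}$ in Hypothesis \ref{hyp1} depends implicitly on $Supp(\tilde\mu) = Supp(\mu) \times \{\overline{0}\}$, so that Hypothesis \ref{hyp1} must be verified for each initial distribution $\mu$ of interest.
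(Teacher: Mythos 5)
Your proposal is correct and follows essentially the same route as the paper: the paper's proof likewise applies Theorem \ref{qed-thm} to the augmented chain $Y$ with initial law $\mu \otimes \delta_{\overline{0}}$ and the lifted function $\tilde f = f \circ \pi$ ($\pi$ the projection on the first component), using the identity $\E_{\mu}\bigl(\frac{1}{n}\sum_{k=0}^{n-1} f(X_k) \mid \tau > n\bigr) = \E_{\mu \otimes \delta_{\overline{0}}}\bigl(\frac{1}{n}\sum_{k=0}^{n-1} \tilde f(Y_k) \mid \tau > n\bigr)$. Your additional bookkeeping remarks (admissibility of $\tilde\mu$, dependence of ${\cal B}_{max}$ on $Supp(\mu)\times\{\overline{0}\}$) are consistent with the paper and do not change the argument.
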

We can also give the following corollary which requires assumptions on $X$ and $(A_n)_{n \in \N}$.
\begin{corollary}
\label{cor-1}
Assume that $(A_n)_{n \in \N}$ is $\gamma$-periodic and that $gcd(T,\gamma)=1$ (where $T$ is the period of $X$). Then there exists $\eta \in {\cal M}_1(E)$ such that, for any $\mu \in {\cal M}_1(E_0)$ and any $f$ bounded measurable,
   $$\E_\mu\left(\frac{1}{n}\underset{k=0}{\overset{n-1}{\sum}} f(X_k) | \tau > n\right) \underset{n \to \infty}{\longrightarrow}\int f d\eta$$
\end{corollary}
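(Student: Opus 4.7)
The plan is to derive this corollary directly from Theorem \ref{qed-thm-mov} applied to $Y_n := (X_n, \overline{n})$ introduced in \eqref{y}. Theorem \ref{qed-thm-mov} will yield
\begin{equation*}
\E_\mu\!\left(\frac{1}{n}\sum_{k=0}^{n-1} f(X_k) \,\bigg|\, \tau > n\right) \underset{n \to \infty}{\longrightarrow} \sum_{x \in E} f(x)\,\eta(x),
\end{equation*}
with $\eta(x) := \sum_{(x,\overline{y}) \notin \partial} \nu_{max}(x,\overline{y})\,\xi_{max}(x,\overline{y})$, which is a probability measure on $E$ because $\langle \nu_{max}, \xi_{max}\rangle = 1$. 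It remains to check (a) Hypothesis \ref{hyp1} is satisfied by $Y$ for every $\mu \in {\cal P}(E_0)$, and (b) the resulting $\nu_{max}, \xi_{max}$ do not depend on $\mu$.

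Both (a) and (b) reduce to the single claim that the sub-chain of $Y$ killed at $\partial$ has exactly one communication class $D := (E \times \Z/\gamma\Z) \setminus \partial$. Assuming this, for every $\mu$ the set ${\cal B}$ introduced in \S\ref{2} equals $D$, so ${\cal B}_{max} = D$, Hypothesis \ref{hyp1} holds with any $x_{max} \in D$, and $\nu_{max}, \xi_{max}$ are intrinsic to $Y$. Regrouping the double sum from Theorem \ref{qed-thm-mov} by the first coordinate $x$ yields the convergence in the corollary.

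The only non-routine step is therefore the proof of this irreducibility, and this is where the hypothesis $\gcd(T,\gamma)=1$ enters. Given $(x,\overline{k}),(y,\overline{l}) \in D$, I want to find an $X$-trajectory of some length $m$ from $x$ to $y$ with $X_j \notin A_{k+j}$ for every $0 \leq j \leq m$ and $m \equiv l-k \pmod{\gamma}$. Irreducibility of $X$ with period $T$ makes $\{m : \P_x(X_m = y) > 0\}$ an arithmetic progression of common difference $T$ for large $m$; the coprimality $\gcd(T,\gamma)=1$ and the Chinese Remainder Theorem let me additionally impose the residue modulo $\gamma$. The avoidance of the moving killing set is then obtained by splicing \eqref{irreducibility} over the $\gamma$ relevant phases: route the trajectory through a reference vertex in $\bigcap_{j=0}^{\gamma-1} E_j$ and loop there for a multiple of $T\gamma$ steps to adjust the residues while staying away from $\partial$.

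I expect this splicing step to be the main obstacle, because \eqref{irreducibility} only forbids one fixed $A_n$ at a time while the set to avoid here changes at every time step. Establishing the non-emptiness of the stable set $\bigcap_{j=0}^{\gamma-1} E_j$ (or a suitable substitute inside each communication class of the unkilled bivariate chain) and checking that \eqref{irreducibility} can be iterated to build each segment will require a careful argument using $\P_x(\tau < \infty) = 1$ together with irreducibility; this is the only place where the structure of the moving boundary is genuinely used.
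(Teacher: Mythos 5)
Your reduction is exactly the one the paper intends: the corollary is stated with no separate proof and is meant to follow from Theorem \ref{qed-thm-mov} combined with the earlier (also unproved) assertion that the bivariate chain $Y_n=(X_n,\overline{n})$ is irreducible if and only if $\gcd(T(X),\gamma)=1$, in which case the killed chain has a single communication class, Hypothesis \ref{hyp1} holds for every initial law, and $\varphi_{max}$ is independent of $\mu$; regrouping the sum over $E\times\Z/\gamma\Z\setminus\partial$ by the first coordinate gives $\eta$. So at the level of architecture you have reproduced the paper's argument, and you have correctly isolated the single point of real content.

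Where you go beyond the paper is in trying to actually prove that irreducibility, and there your own diagnosis is accurate: the argument as sketched does not close. Two concrete obstructions. First, $\bigcap_{j=0}^{\gamma-1}E_j$ can be empty even when survival for all times is possible (e.g.\ $\gamma=2$, $A_0=\{1\}$, $A_1=\{2\}$ on $E=\{1,2\}$, where the only surviving trajectory alternates), so there may be no fixed reference vertex to loop at. Second, and more fundamentally, the standing assumption \eqref{irreducibility} only provides, for each \emph{fixed} $n$, trajectories avoiding the single set $A_n$; it says nothing about trajectories avoiding $A_{k+j}$ at step $j$ for all $j$ simultaneously, which is what irreducibility of the sub-Markov kernel of $Y$ on $(E\times\Z/\gamma\Z)\setminus\partial$ requires. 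The coprimality $\gcd(T,\gamma)=1$ only resolves the residue constraint for the \emph{unkilled} bivariate chain; it does not by itself produce surviving paths. The paper leaves exactly this step unjustified (it simply declares the equivalence of irreducibility of $Y$ with $\gcd(T(X),\gamma)=1$), so you have not missed anything the paper supplies; but if you want a complete proof you need either an additional structural assumption on $(A_n)$ (as is implicitly satisfied in the random-walk example of Section \ref{random-walk}) or a genuinely new argument showing that the surviving states of $Y$ form one class under the stated hypotheses. As written, your proposal is a faithful reconstruction of the paper's route with an honestly flagged, and real, gap at its only non-routine step.
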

\begin{proof}[Proof of Theorem \ref{qed-thm-mov}]
It is enough to apply Theorem \ref{qed-thm} to the chain $Y$ defined on \eqref{y} and to deduce the results on $X$ thanks to the following equality
\begin{equation*}
\mathbb{E}_{\mu}\left(\frac{1}{n} \underset{k=0}{\overset{n-1}{\sum}} f(X_k) | \tau > n\right) = \mathbb{E}_{\mu \otimes \delta_{\overline{0}}}\left(\frac{1}{n} \underset{k=0}{\overset{n-1}{\sum}} \tilde{f}(Y_k) | \tau > n\right), ~~ \forall n \geq 1
\end{equation*}
where $\tilde{f}$ is the projection on the first component.
\end{proof}

\section{Existence of Q-process with boundaries moving periodically}
In this section, we are interested in the Q-process, which can be interpreted as the law of the process $X$ conditioned never to be killed by the moving boundary. As before, we still consider that the boundary moves periodically period $\gamma$. \\
To show the existence of the Q-process, we will consider again the Markov chain $Y$ defined in \eqref{y}, that is defined by
\begin{equation*}
Y_n = (X_n,\overline{n}),~~\forall n \in \N
\end{equation*}
and we take back the notation introduced in subsection \ref{2} associated to $Y$. \\
The following statement ensures the existence of a Q-process even when the boundary moves. However, it is interesting to observe that we lose the homogeneity of the Q-process because of the movement of the killing boundary.
\begin{theorem}
\label{Q-process}
For any $n \in \N$ and any $x \in E_0$, the probability measure $\mathbb{Q}_x$ defined by
\begin{equation*}
\mathbb{Q}_x(X_1 \in \cdot, \ldots, X_n \in \cdot) = \underset{m \to \infty}{\lim} \P_x(X_1 \in \cdot, \ldots, X_n \in \cdot | \tau > m)
\end{equation*}
is well-defined and, under the probability $\mathbb{Q}_x$, $(X_n)_{n \in \N}$ is a time-inhomogeneous Markov chain such that for any $n \in \N$, for any $(y,z) \in E_{n-1} \times E_n$
\begin{equation*}
\mathbb{Q}_x(X_n = z | X_{n-1} = y) = \frac{\xi_x(z,\overline{n})}{\rho_x \xi_x(y,\overline{n-1})} \P_{y}(X_1 = z, \tau_{A_n} > 1)
\end{equation*}

\end{theorem}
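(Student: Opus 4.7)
The plan is to reduce to the time-homogeneous chain $Y_n = (X_n,\overline{n})$ introduced in Section 5.3, living on the finite state space $E \times \Z/\gamma\Z$ with the fixed absorbing set $\partial$. This lets me reuse the spectral machinery of Subsections 5.1--5.2: I interpret $\rho_x$, $\nu_x$, $\xi_x$ (together with the periodic eigenvectors $v_{x,l}, w_{x,l}$) as the Perron--Frobenius data of the sub-transition matrix of $Y$ restricted to the communicating class $D_{(x,\overline{0})}$, with common period $T$.

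First I will write the finite-dimensional distributions explicitly. For any admissible path $(z_1,\ldots,z_n)$ with $z_k \in E_k$ and $z_0 := x$, the Markov property applied to $Y$ gives
\[
\P_x(X_1 = z_1,\ldots,X_n = z_n,\tau > m) = \left(\prod_{k=1}^n \P_{z_{k-1}}(X_1 = z_k)\right) \P_{(z_n,\overline{n})}(\tau_Y > m-n),
\]
where $\tau_Y = \inf\{k \geq 0 : Y_k \in \partial\}$. Dividing by $\P_x(\tau > m) = \P_{(x,\overline{0})}(\tau_Y > m)$ separates a factor that is constant in $m$ from a ratio of two survival probabilities, and the whole problem reduces to controlling that ratio as $m \to \infty$.

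Next I will apply the spectral expansion derived inside the proof of Proposition \ref{quasi-ergodicity}, specialized to $f \equiv 1$:
\[
\P_y(\tau_Y > k) = \rho_x^k \sum_{l=0}^{T-1} e^{-\frac{2ikl\pi}{T}} \overline{w_{x,l}(y)} \langle v_{x,l},\1\rangle + o(\rho_x^k).
\]
The delicate point is handling the period $T$ of $Y$ on $D_{(x,\overline{0})}$: in the aperiodic case the ratio would immediately collapse to $\xi_x(z_n,\overline{n})/(\rho_x^n \xi_x(x,\overline{0}))$, but in general the oscillating sums must match. The crucial observation is that admissibility of the path forces a residue-class alignment in the cyclic decomposition $C_0,\ldots,C_{T-1}$: if $(x,\overline{0}) \in C_{k_0}$ then necessarily $(z_n,\overline{n}) \in C_{k_0 + n \bmod T}$. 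Substituting into the formula of Proposition \ref{non-nul}, the divisibility conditions $T \mid (m-n) + (k_0+n) - j$ and $T \mid m + k_0 - j$ coincide, the common sum $\sum_y \nu_x(y)$ factors out, and what survives in the numerator and denominator is exactly $\xi_x(z_n,\overline{n})$ and $\xi_x(x,\overline{0})$ respectively. Proposition \ref{non-nul} also guarantees that the denominator is nonzero, so the passage to the limit is legitimate.

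Combining these ingredients, the limit equals
\[
\mathbb{Q}_x(X_1 = z_1,\ldots,X_n = z_n) = \frac{\xi_x(z_n,\overline{n})}{\rho_x^n \xi_x(x,\overline{0})} \prod_{k=1}^n \P_{z_{k-1}}(X_1 = z_k),
\]
and a telescoping rewriting $\xi_x(z_n,\overline{n})/\xi_x(x,\overline{0}) = \prod_{k=1}^n \xi_x(z_k,\overline{k})/\xi_x(z_{k-1},\overline{k-1})$ exhibits this as the finite-dimensional distribution of a time-inhomogeneous Markov chain whose one-step kernel from time $n-1$ to time $n$ is precisely $\frac{\xi_x(z,\overline{n})}{\rho_x \xi_x(y,\overline{n-1})} \P_y(X_1 = z)$, as claimed. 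The main obstacle is thus the periodicity: without the path-induced class alignment one would recover convergence only along subsequences $m \equiv r \pmod T$; resolving the oscillations via that alignment is the key technical step.
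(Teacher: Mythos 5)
Your proposal is correct and follows essentially the same route as the paper: reduce to the homogeneous chain $Y$, use the Markov property to isolate the ratio of survival probabilities, expand both via the spectral formula with $f\equiv 1$, and resolve the periodic oscillations through the cyclic-class alignment $j(Y_n)=j(x)+n \bmod T$ so that the common nonzero factor (guaranteed by Proposition \ref{non-nul}) cancels, leaving $\xi_x(z_n,\overline{n})/(\rho_x^n\,\xi_x(x,\overline{0}))$. The only cosmetic differences are that you work with indicator functions of admissible paths rather than general bounded test functions, and you carry out the cancellation via the explicit divisibility computation of Proposition \ref{non-nul} instead of factoring out the common oscillating sum.
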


\begin{proof}

For any $m,n \in \N$, for any $f_1, \ldots, f_n$ measurable bounded functions and for any $x \in E_0$,
\begin{align}
\E_x(f_1(Y_1) \ldots f_n(Y_n) | \tau > n+m) &= \frac{\E_x(f_1(Y_1) \ldots f_n(Y_n) \1_{\tau > n+m})}{\P_x(\tau > n+m)} \\
\label{brah}
&=\E_x\left(f_1(Y_1) \ldots f_n(Y_n) \frac{\1_{\tau > n} \P_{Y_n}(\tau > m)}{\P_x(\tau > n+m)}\right) 
\end{align}
According to the equality \eqref{zbraaa} applied to the function equal to $1$, for any $y \in E \times Z/\gamma\Z - \d$ and $n \in \N$,
$$\P_y(\tau > n) = \rho_y^n \underset{l=0}{\overset{T_y-1}{\sum}} e^{-\frac{2inl\pi}{T_y}} <w_{y,l}, \delta_y> <v_{y,l},\1> + o(\rho_y^n)$$ 
Thus, using this in \eqref{brah},
\begin{align}
&\E_x(f_1(Y_1) \ldots f_n(Y_n)| \tau > m+n) \\
\label{from}
&= \E_x\left(f_1(Y_1)\ldots f_n(Y_n) \frac{\1_{\tau > n}\rho_{Y_n}^m \underset{l=0}{\overset{T_{Y_n}-1}{\sum}} e^{-\frac{2iml\pi}{T_{Y_n}}} <w_{Y_n,l},\delta_{Y_n}> <v_{Y_n,l},\1> + o(\rho_{Y_n}^m)}{\rho_x^{n+m} \underset{l=0}{\overset{T_x-1}{\sum}} e^{-\frac{2i(n+m)l\pi}{T_x}} <w_{x,l},\delta_x> <v_{x,l},\1> + o(\rho_x^{n+m})}\right) \\
\label{to}
&= \E_x\left(f_1(Y_1)\ldots f_n(Y_n) \frac{\1_{\tau > n}\rho_{x}^m \underset{l=0}{\overset{T_{x}-1}{\sum}} e^{-\frac{2iml\pi}{T_{x}}} <w_{x,l},\delta_{Y_n}> <v_{x,l},\1> + o(\rho_{x}^m)}{\rho_x^{n+m} \underset{l=0}{\overset{T_x-1}{\sum}} e^{-\frac{2i(n+m)l\pi}{T_x}} <w_{x,l},\delta_x> <v_{x,l},\1> + o(\rho_x^{n+m})}\right) \\
&= \E_x\left(f_1(Y_1)\ldots f_n(Y_n) \frac{\1_{\tau > n} \underset{l=0}{\overset{T_{x}-1}{\sum}} e^{-\frac{2iml\pi}{T_{x}}} <w_{x,l},\delta_{Y_n}> <v_{x,l},\1> + o(1)}{\rho_x^{n} \underset{l=0}{\overset{T_x-1}{\sum}} e^{-\frac{2i(n+m)l\pi}{T_x}} <w_{x,l},\delta_x> <v_{x,l},\1> + o(\rho_x^n)}\right) 
\label{according}
\end{align}
The passage from \eqref{from} to \eqref{to} is due to the fact that, for any $n \in \N$, $Y_n \in D_x$ almost surely and the quantities $T_x$, $\rho_x$, $w_{x,l}$ and $v_{x,l}$ depends only on $D_x$. \\
Since the restriction of the chain $Y$ on $D_x$ is irreducible, we can construct as in the subsection \ref{1-1} some clusters $(C_j)_{0 \leq j \leq T_x-1}$ such that $x \in C_0$ and
\begin{equation*}
\P_x(Y_{k+nT_x} \in C_k, \tau > k+nT_x) = 1, ~~\forall k \in \{0, \ldots, T_x-1\}, \forall n \in \N
\end{equation*} 
For any $y \in D_x$, denote by $j(y)$ the integer such that $y \in C_{j(y)}$. Then we deduce from the equality \eqref{eigenvectors} in the subsection \ref{1-1} that for any $y \in E \times \Z/\gamma\Z - \d$ and $n \in \N$, 
\begin{equation*}
 e^{-\frac{2inl\pi}{T_x}} <w_{x,l},\delta_y> = e^{-\frac{2i\pi(n+j(y))l}{T_x}}\xi_x(y)
\end{equation*}
Thus, according to \eqref{according} and the previous equality,
\begin{align*}
&\E_x(f_1(Y_1) \ldots f_n(Y_n)| \tau > m+n) \\
&= \E_x\left(f_1(Y_1)\ldots f_n(Y_n) \frac{\1_{\tau > n} \xi_{x}(Y_n) \left(\underset{l=0}{\overset{T_x-1}{\sum}} e^{-\frac{2i\pi(m+j(Y_n))l}{T_x}}<v_{x,l},\1> + o(1)\right)}{{\rho}_{x}^{n} \xi_{x}(x) \left(\underset{l=0}{\overset{T_x-1}{\sum}} e^{-\frac{2i\pi(m+n+j(x))l}{T_x}}<v_{x,l},\1> + o(1)\right)}\right)
\end{align*}
However, for any $n \in \N$, 
\begin{equation*}
j(Y_n) = j(x) + n \mod T_x, ~~a.s.
\end{equation*}
and for any $m,n \in \N$,
$$\underset{l=0}{\overset{T_x-1}{\sum}} e^{-\frac{2i\pi(m+n+j(x))l}{T_x}}<v_{x,l},\1> \ne 0$$Since the state space $E \times \Z / \gamma \Z$ is finite, we may first consider function $f_i(y) = \1_{y = x_i}$, so that quantities in the ratio except $\1_{\tau > n}$ are fixed. This justifies that we can exchange the expectation and the limit as $n \to \infty$ in the previous expression.
We deduce that,
$$\E_x(f_1(Y_1) \ldots f_n(Y_n)| \tau > m+n) \underset{m \to \infty}{\longrightarrow}  \E_x\left(f_1(Y_1)\ldots f_n(Y_n) \frac{\1_{\tau > n} \xi_{x}(Y_n)}{{\rho}_{x}^{n} \xi_{x}(x,\overline{0})}\right) $$
The statement on $X$ is obtained using projection functions and we can deduce from it the transition kernel of the Q-process.

\end{proof}
\section{Example : discrete-time random walk}
\label{random-walk}
We shall illustrate the  previous results by looking at a discrete-time random walk.
Let $p \in ]0,1[$. We denote by $(M_n^{p})_{n \in \mathbb{N}}$ the Markov chain defined on $\Z$ such that
\begin{align*}
    &\P(M_{n+1}^p = M_n^p + 1 | M_n^p) = 1-p \\
    &\P(M_{n+1}^p = M_n^p - 1 | M_n^p) = p
\end{align*}
Before dealing with the quasi-ergodicity with moving boundaries, let us recall some properties about quasi-stationarity concerning random walks. For any $K \geq 1$ we define
\begin{equation*}
    T_K = \inf\{n \geq 0 : M_n^p \in (-\infty,0] \cup [K+1, \infty) \}
\end{equation*}
The sub-Markovian transition matrix associate to $(M_{n \land T_K}^{p})_{n \in \N}$ is the matrix $Q_{K} \in M_{K}(\mathbb{R})$ defined by :  
\begin{equation*}
Q_{K} = \begin{pmatrix}
0 & 1-p & 0 & \ldots & 0 & 0 \\
p & 0 & 1-p & \ldots & 0 & 0 \\
0 & p & 0 & \ldots & 0 & 0\\
\vdots & \vdots & \vdots & \ddots & \vdots & \vdots \\
0 & 0 & 0 & \ldots & 0 & 1-p \\
0 & 0 & 0 & \ldots & p & 0  
\end{pmatrix}
\end{equation*}
For any $K \geq 1$, denote by $P_K(X)$ the characteristic polynomial of $Q_{K}$. Using standard algebraic manipulations, one can show that for any $K \geq 1$, the following recurrence relation is satisfied 
\begin{equation*}
P_{K+2}(X) = -X P_{K+1}(X) - p(1-p) P_{K}(X)
\end{equation*}
with $P_1(X) = -X$ and $P_2(X) = X^2 - p(1-p)$. We set $P_0(X) = 1$.\\
For any $K \geq 0$, define
\begin{equation*}
U_K(X) = \left(-\frac{1}{\sqrt{p(1-p)}}\right)^K P_K\left(2\sqrt{p(1-p)}X\right)
\end{equation*}
Then the following equation is satisfied
\begin{equation*}
U_{K+2}(X) = 2X U_{K+1}(X) - U_K(X)
\end{equation*}
for which $U_0(X) = 1$ and $U_1(X) = 2X$. In other words, the sequence $(U_K)_{K \geq 0}$ are the Chebyshev's polynomials of the second kind and we have for any $\theta \in \mathbb{R}$
\begin{equation*}
U_K(\cos(\theta)) = \frac{\sin((K+1)\theta)}{\sin(\theta)}
\end{equation*}
The set of roots of $U_K$, hence of $P_K$, is thus well-known. It follows  
\begin{equation*}
Sp(Q_{K}) = \left\{ \lambda_j := 2\sqrt{p(1-p)}\cos\left(\frac{j \pi}{K+1}\right) : j \in \{1, \ldots, K\}\right\}
\end{equation*}
We are interested now in the eigenvectors of $Q_K$. 
\begin{proposition}
Let $K \geq 1$. Then, for any $j \in \{1, \ldots, K\}$, $\text{Ker}(Q_K - \lambda_j I_k) = Span(x_j)$ where
\begin{equation*}
x_j(i) = \left(-\frac{1}{1-p}\right)^{i-1} P_{i-1}(\lambda_j) = \left(\sqrt{\frac{p}{1-p}}\right)^{i-1} \frac{\sin\left(\frac{ij\pi}{K+1}\right)}{\sin\left(\frac{j\pi}{K+1}\right)}, ~~\forall i \in \{1, \ldots , K\}
\end{equation*}
\end{proposition}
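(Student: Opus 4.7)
The plan is to reduce the eigenvalue problem for $Q_K$ to the three-term recurrence satisfied by $(P_i)_{i \geq 0}$ and then read off the closed form via the Chebyshev identity recalled just before the proposition. First, since $Q_K$ is tridiagonal with strictly positive sub- and super-diagonal entries, and since $\cos$ is strictly decreasing on $(0,\pi)$, the $K$ eigenvalues $\lambda_1,\ldots,\lambda_K$ already computed are pairwise distinct. Consequently each $\lambda_j$ is simple and $\mathrm{Ker}(Q_K-\lambda_j I_K)$ is one-dimensional, so it suffices to exhibit one nonzero vector $x$ with $Q_K x = \lambda_j x$.

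Writing the eigenvalue equation componentwise, one obtains the three-term recurrence
\[
(1-p)\, x(i+1) = \lambda_j\, x(i) - p\, x(i-1),
\]
valid for $1 \leq i \leq K$ with the convention $x(0) := 0$ (which encodes the first row) and with the terminal constraint $x(K+1) = 0$ (which encodes the last row). Starting from the normalization $x(1) := 1$, I would verify by induction on $i$ that the unique solution of the unconstrained recurrence is precisely
\[
x(i) = \left(-\frac{1}{1-p}\right)^{i-1} P_{i-1}(\lambda_j).
\]
The induction step amounts to substituting the closed form into the recurrence, factoring out $(-1/(1-p))^{i-2}$, and invoking the polynomial recurrence $P_i(X) = -X P_{i-1}(X) - p(1-p)P_{i-2}(X)$ given in the excerpt, with initial data $P_0=1$, $P_1(X)=-X$. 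The terminal condition $x(K+1)=0$ then reads $(-1/(1-p))^K P_K(\lambda_j) = 0$, which holds automatically because $\lambda_j$ is a root of the characteristic polynomial $P_K$.

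It remains to translate the polynomial expression into the trigonometric one. Inverting the definition of $U_{i-1}$, namely $U_{i-1}(X) = (-1/\sqrt{p(1-p)})^{i-1} P_{i-1}(2\sqrt{p(1-p)}\,X)$, gives
\[
P_{i-1}(\lambda_j) = \left(-\sqrt{p(1-p)}\right)^{i-1} U_{i-1}\!\left(\cos\tfrac{j\pi}{K+1}\right),
\]
and the identity $U_{i-1}(\cos\theta) = \sin(i\theta)/\sin\theta$ (applied with $\theta = j\pi/(K+1)$) finishes the computation, once one observes that the sign factors combine as
\[
\left(-\frac{1}{1-p}\right)^{i-1}\!\left(-\sqrt{p(1-p)}\right)^{i-1} = \left(\sqrt{\tfrac{p}{1-p}}\right)^{i-1}.
\]

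I anticipate no real obstacle: the content is the classical diagonalisation of a Jacobi matrix by its associated family of orthogonal polynomials, specialised to the birth-death chain at hand. The only bookkeeping that requires care is the interplay between the sign conventions in $P_i$ and the prefactor $-1/(1-p)$ in the eigenvector, but this alternation is precisely what makes the induction close cleanly.
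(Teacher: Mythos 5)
Your proof is correct and follows essentially the same route as the paper: solve the componentwise eigenvalue equation as a three-term recurrence by induction, close it with the polynomial recurrence for $P_i$, and pass to the trigonometric form via the Chebyshev identity $U_{i-1}(\cos\theta)=\sin(i\theta)/\sin\theta$. If anything, you are more complete than the paper, since you explicitly check the terminal condition $P_K(\lambda_j)=0$ encoding the last row and justify that the eigenspace is one-dimensional, both of which the paper leaves implicit.
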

\begin{proof}
Let $\lambda \in Sp(Q_{K})$. We want to find all the eigenvectors $x = (x(i))_{1 \leq i \leq K}$ associated to $\lambda$ such that $x(1)=1$. We will prove the proposition by double induction. \\\\
\textbf{Base case:} According to the relation $Q_{K} x = \lambda x$, we have 
\begin{equation}
\lambda x(1) = (1-p) x(2)
\end{equation}
Having $x(1)=1$, we will have therefore $x(2) = \frac{\lambda}{1-p} = -\frac{1}{1-p} P_1(\lambda)$, which conclude the base case\\\\
\textbf{Inductive step:} Let $i \in \{3, \ldots, K-1\}$. We assume that the equality is satisfied for $i-1$ and $i-2$, so we have 
\begin{align*}
&x({i-2}) = \left(-\frac{1}{1-p}\right)^{i-3} P_{i-3}(\lambda) \\
&x({i-1}) = \left(-\frac{1}{1-p}\right)^{i-2} P_{i-2}(\lambda)
\end{align*}
Using $\lambda x = Q_{K} x$, 
\begin{equation*}
\lambda x({i-1}) = px({i-2}) + (1-p)x(i)
\end{equation*}
So
\begin{align*}
x(i) &= \frac{1}{1-p}\left( \lambda x({i-1}) - px({i-2})\right) \\
&=\frac{1}{1-p}\left( \lambda \left(-\frac{1}{1-p}\right)^{i-2} P_{i-2}(\lambda) - px({i-2})\right) \\
&= \left(-\frac{1}{1-p}\right)^{i-1} \left(-\lambda P_{i-2}(\lambda) - p(1-p) P_{i-3}(\lambda)\right) \\
&=\left(-\frac{1}{1-p}\right)^{i-1} P_{i-1}(\lambda)
\end{align*}
which concludes the proof.
\end{proof}
The previous proposition gives us left and right eigenvectors of $Q_K$ : if we denote by $(v_i)_{1 \leq i \leq K}$ (respectively $(w_i)_{1 \leq i \leq K}$) the left (respectively right) eigenvectors satisfying $v_iQ_K = \lambda_i v_i$ (respectively $Q_K w_i = \lambda_i w_i$), then
\begin{align*}
    &v_i(j) = \left(\sqrt{\frac{1-p}{p}}\right)^{j-1} \frac{\sin\left(\frac{ij\pi}{K+1}\right)}{\sin\left(\frac{i\pi}{K+1}\right)} \\
    &w_i(j) = \left(\sqrt{\frac{p}{1-p}}\right)^{j-1} \frac{\sin\left(\frac{ij\pi}{K+1}\right)}{\sin\left(\frac{i\pi}{K+1}\right)}
\end{align*}
In particular, considering the spectral radius $\lambda_1$, the quasi-stationary distribution $\nu$ and the right-eigenvector $\xi$ associated to $\lambda_1$ satisfying $<\nu,\xi>=1$ are as follows:
\begin{align*}
    &\nu(j) = \frac{\left(\sqrt{\frac{1-p}{p}}\right)^{j-1} \sin\left(\frac{j\pi}{K+1}\right)}{\underset{k=1}{\overset{K}{\sum}}\left(\sqrt{\frac{1-p}{p}}\right)^{k-1} \sin\left(\frac{k\pi}{K+1}\right)} \\
    &\xi(j) = \frac{\underset{k=1}{\overset{K}{\sum}}\left(\sqrt{\frac{1-p}{p}}\right)^{k-1} \sin\left(\frac{k\pi}{K+1}\right)}{\underset{k=1}{\overset{K}{\sum}}\sin^2\left(\frac{k\pi}{K+1}\right)}\left(\sqrt{\frac{p}{1-p}}\right)^{j-1} \sin\left(\frac{j\pi}{K+1}\right)
\end{align*}
We are interested now in moving boundaries. Let $N \geq 1$  and consider the simplest case where $(A_n)_{n \in \N}$ is defined by
\begin{equation}
    A_n = \left\{
  \begin{array}{ll}
    (-\infty,0] \cup [2N,\infty) &\textit{if $n$ is even}   \\
    (-\infty,1] \cup [2N-1,\infty) &\textit{if $n$ is odd} 
  \end{array}
\right.
\label{simplest-case}
\end{equation}
Recall the previous notation 
\begin{equation}
Y^p_n = (M^p_{n \land \tau_0},\overline{n})
\end{equation}   
with $\overline{n} \in \Z/2\Z$. The chain is not irreducible (if $M^p_0$ is even, then for any $n$, $M^p_n$ have the same parity as $n$). It admits exactly two irreducible subsets:
\begin{enumerate}
    \item  ${\cal P} = \{(x,y) \in E : x+y \text{ is even}\}$
    \item  ${\cal I} = \{(x,y) \in E : x+y \text{ is odd}\}$
\end{enumerate}
But, as we can see in Figure \ref{figure}, the chain $Y^p$ behaves as a random walk on each irreducible subsets:
\begin{enumerate}
\item On $\cal P$, $Y^p$ has the same behavior as a random walk on $\mathbb{Z}$ starting from $[2,2N-2]$ absorbed by $\{1,2N-1\}$.
\item On $\cal I$, $Y^p$ has the same behavior as a random walk on $\mathbb{Z}$ starting from $[1,2N-1]$ absorbed by $\{0,2N\}$.
\end{enumerate}
Denote by $Y^p_{\cal P}$ (respectively $Y^p_{\cal I}$) the Markov chain such that for any $\mu \in {\cal M}_1({\cal P})$ (respectively ${\cal M}_1({\cal I})$)
\begin{equation*}
    \P_\mu(Y^{p}_1 \in \cdot) = \P_\mu((Y^p_{\cal P})_1 \in \cdot) \text{ (respectively }\P_\mu(Y^{p}_1 \in \cdot) = \P_\mu((Y^p_{\cal I})_1 \in \cdot) \text{)}
\end{equation*}
Let $\mu \in {\cal M}_1(E \times \Z/2\Z)$. Then there are $\lambda \in [0,1]$ and $\mu_{\cal P}, \mu_{\cal I} \in {\cal M}_1({\cal P}) \times {\cal M}_1({\cal I})$ such that
\begin{equation*}
    \mu = \lambda \mu_{\cal P} + (1-\lambda)\mu_{\cal I}
\end{equation*}
Hence we see that two cases are possible
\begin{proposition}
\begin{enumerate}
\item if $\lambda = 1$, ${\cal B}_{max} = {\cal P}$. Then $\rho_{max} = 2\sqrt{p(1-p)}\cos\left(\frac{\pi}{2(N-1)}\right)$,  and
\begin{equation*}
    \E_\mu\left(\frac{1}{n}\underset{k=0}{\overset{n-1}{\sum}} f(M^p_k) | \tau > n\right) \underset{n \to \infty}{\longrightarrow} \underset{j=2}{\overset{2N-2}{\sum}} f(j) \frac{\sin^2\left(\frac{(j-1)\pi}{2(N-1)}\right)}{\underset{k=1}{\overset{2N-3}{\sum}}\sin^2\left(\frac{k\pi}{2(N-1)}\right)}
\end{equation*} \\
\item if $\lambda \ne 1$, ${\cal B}_{max} = {\cal I}$. Then $\rho_{max} = 2\sqrt{p(1-p)}\cos\left(\frac{\pi}{2N}\right)$,  and
\begin{equation*}
    \E_\mu\left(\frac{1}{n}\underset{k=0}{\overset{n-1}{\sum}} f(M^p_k) | \tau > n\right) \underset{n \to \infty}{\longrightarrow} \underset{j=1}{\overset{2N-1}{\sum}} f(j) \frac{\sin^2\left(\frac{j\pi}{2N}\right)}{\underset{k=1}{\overset{2N-1}{\sum}}\sin^2\left(\frac{k\pi}{2N}\right)}
\end{equation*}
\end{enumerate}
\end{proposition}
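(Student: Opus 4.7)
The proof is a direct application of Theorem \ref{qed-thm-mov} to the auxiliary Markov chain $Y^p_n = (M^p_n, \overline{n})$ on $E \times \Z/2\Z$, for which the killing set $\partial$ is non-moving. The preceding discussion has already identified the two irreducible closed classes $\cal P$ and $\cal I$ of $Y^p$ and observed that the restriction of $Y^p$ to each class behaves like a standard random walk on an interval. My plan is to make that correspondence explicit, read off the spectral data from the analysis of $Q_K$ already carried out in the section, verify Hypothesis \ref{hyp1}, and then plug the explicit left and right eigenvectors into the formula supplied by Theorem \ref{qed-thm-mov}.

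First I would make explicit the identifications $Y^p|_{\cal P} \cong Q_{2N-3}$ and $Y^p|_{\cal I} \cong Q_{2N-1}$. On $\cal P$ the surviving positions run through $\{2,3,\ldots,2N-2\}$ (the time-parity being forced by the position parity), so after the shift $x \mapsto x-1$ this is a standard random walk on $\{1,\ldots,2N-3\}$ absorbed at $\{0, 2N-2\}$; on $\cal I$ one directly recovers a random walk on $\{1,\ldots,2N-1\}$ absorbed at $\{0, 2N\}$. The formula $\lambda_1 = 2\sqrt{p(1-p)}\cos(\pi/(K+1))$ for the spectral radius of $Q_K$ then yields $\rho_{\cal P} = 2\sqrt{p(1-p)}\cos(\pi/(2(N-1)))$ and $\rho_{\cal I} = 2\sqrt{p(1-p)}\cos(\pi/(2N))$, and the strict monotonicity of $\cos$ on $(0,\pi/2)$ gives $\rho_{\cal I} > \rho_{\cal P}$.

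Next I would decompose the lifted initial distribution $\mu \otimes \delta_{\overline{0}}$ as $\lambda \mu_{\cal P} + (1-\lambda)\mu_{\cal I}$. The set of classes intersected by its support is then $\{\cal P\}$ when $\lambda = 1$ and contains $\cal I$ otherwise; combined with $\rho_{\cal I} > \rho_{\cal P}$ this forces ${\cal B}_{max} = \cal P$ in the first case and ${\cal B}_{max} = \cal I$ in the second. In both situations ${\cal B}_{max}$ is itself a single irreducible class, hence equals $D_{x_{max}}$ for any $x_{max}$ in it, so Hypothesis \ref{hyp1} is automatically satisfied.

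It remains to evaluate the limit supplied by Theorem \ref{qed-thm-mov}. Using the explicit eigenvector formulas for $Q_K$ derived earlier in this section, the key observation is that the $p$-dependent factors $((1-p)/p)^{j-1}$ in the left eigenvector and $(p/(1-p))^{j-1}$ in the right eigenvector cancel exactly in the product, leaving
\[
\nu(j)\xi(j) = \frac{\sin^2(j\pi/(K+1))}{\sum_{k=1}^{K} \sin^2(k\pi/(K+1))}.
\]
Summing $f(x)\nu_{max}(x,y)\xi_{max}(x,y)$ over $(x,y) \in {\cal B}_{max}\setminus \partial$ and using the bijection established in the identification step, this yields the two stated expressions with $K = 2N-3$ and $K = 2N-1$ respectively. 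The main (mild) obstacle is the bookkeeping tied to this bijection: tracking how each pair $(x,y) \in \cal P$ or $\cal I$ corresponds to an index in $\{1,\ldots,K\}$ for the underlying interval walk, and checking that the normalizations $\langle \nu, \mathbbm{1} \rangle = \langle \nu, \xi \rangle = 1$ persist under the identification.
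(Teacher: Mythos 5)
Your proposal is correct and follows exactly the route the paper intends (and leaves largely implicit): apply Theorem \ref{qed-thm-mov} to the lifted chain $Y^p$, identify the restrictions to $\cal P$ and $\cal I$ with the interval walks $Q_{2N-3}$ and $Q_{2N-1}$, compare the spectral radii to determine ${\cal B}_{max}$ and check Hypothesis \ref{hyp1}, and observe that the $p$-dependent factors cancel in $\nu(j)\xi(j)$. The only point deserving the care you already flag is the index shift $x \mapsto x-1$ in the $\cal P$ case, which the paper's own statement absorbs into its notation.
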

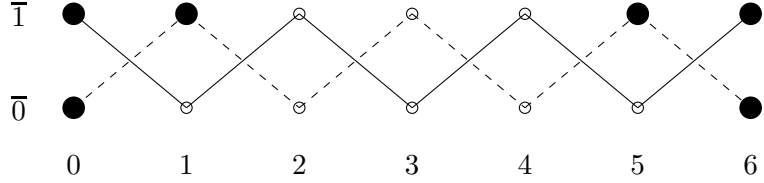
\begin{figure}
\centering
\begin{tikzpicture}
\fill (0,0) circle (0.15);
\fill (0,1.25) circle (0.15);
\draw (1.5,0) circle (0.075);
\fill (1.5,1.25) circle (0.15);
\draw (3,0) circle (0.075);
\draw (3,1.25) circle (0.075);
\draw (4.5,0) circle (0.075);
\draw (4.5,1.25) circle (0.075);
\draw (6,0) circle (0.075);
\draw (6,1.25) circle (0.075);
\draw (7.5,0) circle (0.075);
\fill (7.5,1.25) circle (0.15);
\fill (9,0) circle (0.15);
\fill (9,1.25) circle (0.15);
\draw (0,1.25) -- (1.5,0);
\draw (1.5,0) -- (3,1.25);
\draw (3,1.25) -- (4.5,0);
\draw (4.5,0) -- (6,1.25);
\draw (6,1.25) -- (7.5,0);
\draw (7.5,0) -- (9,1.25);
\draw [dashed] (0,0) -- (1.5,1.25);
\draw [dashed] (1.5,1.25) -- (3,0);
\draw [dashed] (3,0) -- (4.5,1.25);
\draw [dashed] (4.5,1.25) -- (6,0);
\draw [dashed] (6,0) -- (7.5,1.25);
\draw [dashed] (7.5,1.25) -- (9,0);
\draw (-0.5,0) node [left] {$\overline{0}$};
\draw (-0.5,1.25) node [left] {$\overline{1}$};
\draw (0,-0.5) node [below] {$0$};
\draw (1.5,-0.5) node [below] {$1$};
\draw (3,-0.5) node [below] {$2$};
\draw (4.5,-0.5) node [below] {$3$};
\draw (6,-0.5) node [below] {$4$};
\draw (7.5,-0.5) node [below] {$5$};
\draw (9,-0.5) node [below] {$6$};
\end{tikzpicture}
\caption{The black dots represent the states in $\d$. The irreducible subsets ${\cal P}$ and ${\cal I}$ are represented respectively by the dashed path and the filled path. On each path, we see that $Y^p$ behaves as a random walk.}
\label{figure} 
\end{figure}
When $(A_n)_{n \in \N}$ is moving as \eqref{simplest-case}, the quasi-ergodic distribution is the same as the non-moving quasi-ergodic distribution for one random walk absorbed at $\{0,2N\}$ except when the support of the initial distribution is included in the set of even numbers. As a matter of fact, if the chain starts from the set of even numbers, it can be absorbed only by $\{1,2N-1\}$. Remark also that the quasi-ergodic distribution of one random walk does not depend on $p$ anymore. \\\\
We have also the existence of a Q-process according to Theorem \ref{Q-process} which is the time-inhomogeneous Markov chain $(Z^p_n)_{n \in \N}$ defined by
\begin{equation*}
\P_x(Z^p_{n} = y \pm 1 | Z^p_{n-1}=y)  = 
       \frac{\sin\left(\frac{(y \pm 1)\pi}{K(y,n)}\right)}{2\sin\left(\frac{y\pi}{K(y,n)} \right)\cos\left(\frac{\pi}{K(y,n)}\right)}
\end{equation*}
with $K(y,n) = 2N-1+(-1)^{n+y}$.
\\\\

\textbf{Acknowledgement.} I would like to thank my advisor Patrick Cattiaux for his useful advices concerning the topic of this paper.

\bibliographystyle{abbrv}
\bibliography{biblio-william}

\begin{thebibliography}{1}

\bibitem{CCG2016}
P.~Cattiaux, C.~Christophe, and S.~Gadat.
\newblock A stochastic model for cytotoxic {T} lymphocyte interaction with
  tumor nodules.
\newblock {\em preprint}, 2016.

\bibitem{CMSM}
P.~Collet, S.~Mart\'{i}nez, and J.~San~Martin.
\newblock {\em Quasi-stationary distributions}.
\newblock Probability and its Applications (New York). Springer, Heidelberg,
  2013.
\newblock Markov chains, diffusions and dynamical systems.

\bibitem{Darroch1965}
J.~N. Darroch and E.~Seneta.
\newblock On quasi-stationary distributions in absorbing discrete-time finite
  {M}arkov chains.
\newblock {\em J. Appl. Probability}, 2:88--100, 1965.

\bibitem{MV2012}
S.~M\'el\'eard and D.~Villemonais.
\newblock Quasi-stationary distributions and population processes.
\newblock {\em Probab. Surv.}, 9:340--410, 2012.

\bibitem{Seneta1980}
E.~Seneta.
\newblock {\em Non-negative matrices and {M}arkov chains}.
\newblock Springer Series in Statistics. Springer, New York, 2006.
\newblock Revised reprint of the second (1981) edition [Springer-Verlag, New
  York; MR0719544].

\end{thebibliography}

\end{document}